\newtheorem{theorem}{Theorem}[section]
\newtheorem{lemma}[theorem]{Lemma}
\newtheorem{proposition}[theorem]{Proposition}
\newtheorem{corollary}[theorem]{Corollary}
\theoremstyle{definition}
\newtheorem{definition}[theorem]{Definition}
\theoremstyle{remark}
\newtheorem{remark}[theorem]{Remark}
\numberwithin{equation}{section}
\begin{document}
\setcounter{page}{1}

\title[Orlicz spaces associated to a quasi-Banach function space]{Orlicz spaces associated to a quasi-Banach function space. Applications to vector measures and interpolation}

\author{R. del Campo}
\address{%
Dpto. Matem\'atica Aplicada I\\
E.T.S.I.A. Carretera de Utrera, km 1\\
41013 Sevilla\\
Spain}
\email{rcampo@us.es}

\address{%
\\
A. Fern\'{a}ndez, F. Mayoral and F. Naranjo\\
Dpto. Matem\'atica Aplicada II\\
E.T.S.I. Camino de los Descubrimientos s/n\\
41092 Sevilla\\
Spain}

\author{A. Fern\'andez}
\email{afcarrion@us.es}

\author{F. Mayoral}
\email{mayoral@us.es}

\author{F. Naranjo}
\email{naranjo@us.es}

\subjclass{Primary 46E30; Secondary 46G10.}

\keywords{Orlicz spaces, quasi-Banach function spaces, vector measures}

\begin{abstract}
The Orlicz spaces $X^{\Phi}$ associated to a quasi-Banach function space $X$ are defined by replacing the role of the space $L^1$ by $X$ in the classical construction of Orlicz spaces. Given a vector measure $m,$ we can apply this construction to the spaces $L^1_w(m),$ $L^1(m)$ and $L^1(\|m\|)$ of integrable functions (in the weak, strong and Choquet sense, respectively) in order to obtain the known Orlicz spaces $L^{\Phi}_w(m)$ and $L^{\Phi}(m)$ and the new ones $L^{\Phi}(\|m\|).$ Therefore, we are providing a framework where dealing with different kind of Orlicz spaces in a unified way. Some applications to complex interpolation are also given.
\end{abstract}

\maketitle

\section{Introduction}
\label{intro}

The Banach lattice $L^1(m)$ of integrable functions with respect to a vector measure $m$ (defined on a $\sigma$-algebra of sets and with values in a Banach space) has been systematically studied during the last 30 years and it has proved to be a efficient tool to describe the optimal domain of operators between Banach function spaces (see \cite{ORS} and the references therein). The Orlicz spaces $L^{\Phi}(m)$ and $L^{\Phi}_w(m)$ associated to $m$ were introduced in \cite{Del} and they have recently shown in \cite{CFMN2} their utility in order to characterize compactness in $L^1(m).$

On the other hand, the quasi-Banach lattice $L^1(\|m\|)$ of integrable functions (in the Choquet sense) with respect to the semivariation of $m$ was introduced in \cite{FMN}. Some properties of this space and their corresponding $L^p(\|m\|)$ with $p>1$ have been obtained, but in order to achieve compactness results in $L^1(\|m\|)$ we would need to dispose of certain Orlicz spaces related to $L^1(\|m\|).$

In \cite{JPU} some generalized Orlicz spaces $X_{\Phi}$ have been obtained by repla\-cing the role of the space $L^1$ by a Banach function space $X$ in the classical construction of Orlicz spaces. Moreover, the spaces $X$ they consider are always supposed to possess the $\sigma$-Fatou property. However, these Orlicz spaces do not cover our situation since:
\begin{itemize}
\item[$\bullet$] the space $L^1(\|m\|)$ is only a quasi-Banach function space, and 
\item[$\bullet$] in most of the time $L^1(m)$ lacks the $\sigma$-Fatou property.
\end{itemize}
Thus, the purpose of this work is to provide a construction of certain Orlicz spaces $X^{\Phi}$ valid for the case of $X$ being an arbitrary quasi-Banach function space (in general without the $\sigma$-Fatou property), with the underlying idea that it can be applied simultaneously to the spaces $L^1(\|m\|)$ and $L^1(m)$ among others. In a subsequent paper \cite{preprint} we shall employ these Orlicz spaces $L^{1}(\|m\|)^{\Phi}$ and their main properties here derived in order to study compactness in $L^1(\|m\|).$

The organization of the paper goes as follows: Section 2 contains the preliminaries which we will need later. Section 3 contains a discussion of completeness in the quasi-normed context without any additional hypothesis on $\sigma$-Fatou property. Section 4 is devoted to introduce the Orlicz spaces $X^{\Phi}$ associated to a quasi-Banach function space $X$ and obtain their main properties. In Section 5, we show that the construction of the previous section allows to capture the Orlicz spaces associated to a vector measure and we take advantage of its generality to introduce the Orlicz spaces associated to its semivariation. Finally, in Section 6 we present some applications of this theory to compute their complex interpolation spaces.

\section{Preliminaries}

Throughout this paper, we shall always assume that $\Omega$ is a nonempty set, $\Sigma$ is a $\sigma$-algebra of subsets of $\Omega,$ $\mu$ is a finite positive measure defined on $\Sigma$ and $L^0(\mu)$ is the space of ($\mu$-a.e. equivalence classes of) measurable functions $f:\Omega\rightarrow\mathbb{R}$ equipped with the topology of convergence in measure.

Recall that a {\em quasi-normed space\/} is any real vector space $X$ equipped with a {\em quasi-norm\/}, that is, a function $\|\cdot\|_{X} : X\rightarrow [0,\infty)$ which satisfies the following axioms:
\begin{itemize}
\item[(Q1)] $\|x\|_{X}=0$ if and only if $x=0.$
\item[(Q2)] $\|\alpha x\|_{X} = |\alpha| \|x\|_{X},$ for $\alpha\in\mathbb{R}$ and $x\in X.$
\item[(Q3)] There exists $K\geq 1$ such that $\left\|x_1+x_2\right\|_{X} \leq K\left(\left\|x_1\right\|_{X} + \left\|x_2\right\|_{X}\right),$ for all $x_1,x_2\in X.$
\end{itemize}
The constant $K$ in (Q3) is called {\em a quasi-triangle constant\/} of $X.$ Of course if we can take $K=1,$ then $\|\cdot\|_{X}$ is a norm and $X$ is a normed space. A {\em quasi-normed function space\/} over $\mu$ is any quasi-normed space $X$ satisfying the following properties:
\begin{itemize}
\item[(a)] $X$ is an ideal in $L^0(\mu)$ and a quasi-normed lattice with respect to the $\mu$-a.e. order, that is, if $f\in L^0(\mu),$ $g\in X$ and $|f|\leq |g|$ $\mu$-a.e., then $f\in X$ and $\|f\|_{X} \leq \|g\|_{X}.$
\item[(b)] The characteristic function of $\Omega,$ $\chi_{\Omega},$ belongs to $X.$
\end{itemize}
If, in addition, the quasi-norm $\|\cdot\|_{X}$ happens to be a norm, then $X$ is called a {\em normed function space\/}. Note that, with this definition, any quasi-normed function space over $\mu$ is continuously embedded into $L^0(\mu),$ as it is proved in \cite[Proposition~2.2]{ORS}.

\begin{remark}
Many of the results that we will present in this paper are true if we assume that the measure space $\left(\Omega,\Sigma,\mu\right)$ is $\sigma$-finite. In this case, the previous condition (b) must be replaced by
\begin{itemize}
\item[(b')] The characteristic function $\chi_{A}$ belongs to $X$ for all $A\in \Sigma$ such that $\mu(A)<\infty.$
\end{itemize}
Nevertheless we prefer to present the results in the finite case for clarity and simplicity in the proofs.
\end{remark}

We say that a quasi-normed function space $X$ has the {\em $\sigma$-Fatou property\/} if for any positive increasing sequence $(f_n)_n$ in $X$ with  $\displaystyle \sup_{n}\|f_n\|_{X}<\infty$ and converging pointwise $\mu$-a.e. to a function $f,$ then $f\in X$ and $\displaystyle \|f\|_{X} = \sup_{n}\|f_n\|_{X}.$ And a quasi-normed function space $X$ is said to be {\em $\sigma$-order continuous\/} if for any positive increasing sequence $(f_n)_n$ in $X$ converging pointwise $\mu$-a.e. to a function $f\in X,$ then $\left\|f-f_n\right\|_{X}\to 0.$

A complete quasi-normed function space is called a {\em quasi-Banach function space} (briefly q-B.f.s.). If, in addition, the quasi-norm happens to be a norm, then $X$ is called a {\em Banach function space\/} (briefly B.f.s.). It is known that if a quasi-normed function space has the $\sigma$-Fatou property, then it is complete and hence a q-B.f.s. (see \cite[Proposition 2.35]{ORS}) and that inclusions between q-B.f.s. are automatically continuous (see \cite[Lemma 2.7]{ORS}).

Given a countably additive vector measure $m:\Sigma\rightarrow Y$ with values in a real Banach space $Y,$ there are several ways of constructing $q$-B.f.s. of integrable functions. Let us recall them briefly. The {\em semivariation\/} of $m$ is the finite subadditive set function defined on $\Sigma$ by
$$
\|m\|(A):=\sup\left\{\left|\langle m, y^*\rangle\right|(A) : y^*\in B_{Y^*} \right\},
$$
where $\left|\langle m, y^*\rangle\right|$ denotes the variation of the scalar measure $\langle m, y^*\rangle:\Sigma\rightarrow\mathbb{R}$ given by $\langle m, y^*\rangle(A):= \langle m(A), y^*\rangle$ for all $A\in\Sigma,$ and $B_{Y^*}$ is the unit ball of $Y^*,$ the dual of $Y.$ A set $A\in\Sigma$ is called {\em $m$-null\/} if $\|m\|(A)=0.$ A measure $\mu:= \left|\langle m, y^*\rangle\right|,$ where  $y^*\in B_{Y^*},$ that is equivalent to $m$ (in the sense that $\|m\|(A)\to 0$ if and only if $\mu(A)\to 0$) is called a {\em Rybakov control measure\/} for $m.$ Such a measure always exists (see \cite[Theorem 2, p.268]{DU}). Let $L^0(m)$ be the space of ($m$-a.e. equivalence classes of) measurable functions $f:\Omega\longrightarrow\mathbb{R}.$ Thus, $L^0(m)$ and $L^0(\mu)$ are just the same whenever $\mu$ is a Rybakov control measure for $m.$

A measurable function $f:\Omega\longrightarrow\mathbb{R}$ is called {\em weakly integrable\/} (with respect to $m$) if $f$ is integrable with respect to $|\langle m, y^*\rangle|$ for all $y^*\in Y^*.$ A weakly integrable function $f$ is said to be {\em integrable\/} (with respect to $m$) if, for each $A\in\Sigma$ there exists an element (necessarily unique) $\displaystyle\int_A f \, dm \in Y,$ satisfying
$$
\left\langle \int_A f \, dm, y^* \right\rangle = \int_A f \, d\langle m, y^*\rangle, \quad y^*\in Y^*.
$$
Given a measurable function  $f:\Omega\longrightarrow\mathbb{R},$ we shall also consider its distribution function (with respect to the semivariation of the vector measure $m$)
$$
\|m\|_{f}: t\in [0,\infty)\rightarrow \|m\|_f(t):=\|m\|\left( \left[|f|>t\right] \right)\in [0,\infty),
$$
where $\left[|f|>t\right] := \left\{ w\in\Omega : |f(w)|>t \right\}.$ This distribution function is bounded, non-increasing and right-continuous.

Let $L^1_w(m)$ be the space of all ($m$-a.e. equivalence classes of) weakly integrable functions, $L^1(m)$ the space of all ($m$-a.e equivalence classes of) integrable functions and $L^1(\|m\|)$ the space of all ($m$-a.e. equivalence classes of) measurable functions $f$ such that its distribution function $\|m\|_{f}$ is Lebesgue integrable in $(0,\infty).$ Letting $\mu$ be any Rybakov control measure for $m,$ we have that $L^1_w(m)$ becomes a B.f.s. over $\mu$ with the $\sigma$-Fatou property when endowed with the norm
$$
\|f\|_{L^1_w(m)} := \sup\left\{  \int_{\Omega} |f| \, d|\langle m, y^*\rangle|  : y^*\in B_{Y^*} \right\}.
$$
Moreover, $L^1(m)$ is a closed $\sigma$-order continuous ideal of $L^1_w(m).$ In fact, it is the closure of $\mathscr{S}(\Sigma),$ the space of simple functions supported on $\Sigma.$ Thus, $L^1(m)$ is a $\sigma$-order continuous B.f.s. over $\mu$ endowed with same norm (see \cite[Theorem 3.7]{ORS} and \cite[p.138]{ORS})). It is worth noting that space $L^1(m)$ does not generally have the $\sigma$-Fatou property. In fact, if $L^1(m) \neq L^1_w(m),$ then $L^1(m)$ does not have the $\sigma$-Fatou property. See \cite{CR} for details.

On the other hand, $L^1(\|m\|)$ equipped with the quasi-norm
$$
\|f\|_{L^1(\|m\|)}:= \int_{0}^{\infty} \|m\|_{f}(t)\, dt.
$$
is a q-B.f.s. over $\mu$ with the $\sigma$-Fatou property (see \cite[Proposition 3.1]{CFMN1}) and it is also $\sigma$-order continuous (see \cite[Proposition 3.6]{CFMN1}). We will denote by $L^{\infty}(m)$ the B.f.s. of all ($m$-a.e. equivalence classes of) essentially bounded functions equipped with the essential sup-norm.

\section{Completeness of quasi-normed lattices}

In this section we present several characterizations of completeness which will be needed later. We begin by recalling one of them valid for general quasi-normed spaces (see \cite[Theorem 1.1]{Mal}).

\begin{theorem} 
\label{Maligranda completeness}
Let $X$ be a quasi-normed space with a quasi-triangle constant $K.$ The following conditions are equivalent:
\begin{itemize}
\item [(i)] $X$ is complete.
\item [(ii)] For every sequence $(x_n)_n\subseteq X$ such that $\displaystyle\sum_{n=1}^{\infty}K^n\|x_n\|_{X}<\infty$ we have $\displaystyle\sum_{n=1}^{\infty} x_n \in X.$
In this case, the inequality $\displaystyle\left\|\sum_{n=1}^{\infty} x_n \right\|_{X} \leq K \sum_{n=1}^{\infty} K^n \|x_n\|_{X}$ holds.
\end{itemize}
\end{theorem}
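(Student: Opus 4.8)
The plan is to prove the two implications separately, with the whole argument resting on an iterated form of the quasi-triangle inequality (Q3). The key preliminary estimate I would establish first, by induction on $N$, is that peeling off the first summand repeatedly gives
$$
\left\| \sum_{n=1}^{N} x_n \right\|_{X} \leq \sum_{n=1}^{N-1} K^{n}\|x_n\|_{X} + K^{N-1}\|x_N\|_{X} \leq \sum_{n=1}^{N} K^{n}\|x_n\|_{X}.
$$
Applied to a block of consecutive terms (reindexing so the block starts at position $N+1$), the same induction yields $\bigl\|\sum_{n=N+1}^{M} x_n\bigr\|_{X} \leq \sum_{n=N+1}^{M} K^{\,n-N}\|x_n\|_{X} \leq \sum_{n=N+1}^{M} K^{n}\|x_n\|_{X}$, using $K\geq 1$ in the last step. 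This block estimate is what controls tails of partial sums.

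For \textbf{(i)}~$\Rightarrow$~\textbf{(ii)}, suppose $\sum_n K^n\|x_n\|_X<\infty$ and set $s_N:=\sum_{n=1}^{N}x_n$. The block estimate gives $\|s_M-s_N\|_{X}\leq \sum_{n=N+1}^{M}K^{n}\|x_n\|_{X}$ for $M>N$, and the right-hand side is a tail of a convergent series, hence tends to $0$. Thus $(s_N)_n$ is Cauchy and, by completeness, converges to some $s=\sum_n x_n\in X$. To get the stated norm bound without assuming the quasi-norm is continuous, I would split once via (Q3): for each $N$, $\|s\|_{X}\leq K\|s_N\|_{X}+K\|s-s_N\|_{X}$, bound $\|s_N\|_{X}\leq \sum_{n=1}^{\infty}K^{n}\|x_n\|_{X}$ by the finite estimate, and then let $N\to\infty$ so that $\|s-s_N\|_{X}\to 0$, leaving $\|s\|_{X}\leq K\sum_{n=1}^{\infty}K^{n}\|x_n\|_{X}$.

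For \textbf{(ii)}~$\Rightarrow$~\textbf{(i)}, I would use the standard device of reducing completeness to a fast-converging subsequence. Given a Cauchy sequence $(x_n)_n$, choose indices $n_1<n_2<\cdots$ with $\|x_{n_{k+1}}-x_{n_k}\|_{X}\leq (2K)^{-k}$, so that the differences $y_k:=x_{n_{k+1}}-x_{n_k}$ satisfy $\sum_k K^{k}\|y_k\|_{X}\leq \sum_k 2^{-k}<\infty$. By hypothesis (ii) the series $\sum_k y_k$ converges in $X$; since its partial sums telescope to $x_{n_{K+1}}-x_{n_1}$, the subsequence $(x_{n_k})_k$ converges to some $x\in X$. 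Finally, a Cauchy sequence with a convergent subsequence converges to the same limit, which one sees from $\|x_n-x\|_{X}\leq K\|x_n-x_{n_k}\|_{X}+K\|x_{n_k}-x\|_{X}$ by choosing $k$ and then $n$ large. Hence $X$ is complete.

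I expect the main obstacle to be the absence of a continuous quasi-norm: one cannot simply pass the norm through the limit $s=\lim_N s_N$, which is exactly why the final inequality in (ii) carries the extra factor $K$ and must be obtained by the single split-and-pass-to-the-limit trick rather than by termwise continuity. The remaining care is purely bookkeeping in the induction for the iterated (Q3) estimate and in the choice of decay rate $(2K)^{-k}$ ensuring summability of $\sum_k K^k\|y_k\|_X$.
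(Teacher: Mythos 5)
Your proof is correct in every detail: the inductive iterated form of (Q3) with the sharper exponent $K^{N-1}$ on the last term, the tail estimate showing the partial sums are Cauchy, the single split $\|s\|_{X}\leq K\|s_N\|_{X}+K\|s-s_N\|_{X}$ followed by a limit in $N$ (which correctly sidesteps the possible discontinuity of the quasi-norm and produces exactly the extra factor $K$ in the stated inequality), and the $(2K)^{-k}$ fast-subsequence reduction for (ii)~$\Rightarrow$~(i) are all sound. Note, however, that the paper contains no proof of this statement to compare against --- it is quoted directly from Maligranda \cite{Mal} (Theorem 1.1) --- and your argument is essentially the standard one from that source, so nothing needs to change.
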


The next result is a version of Amemiya's Theorem (\cite[Theorem 2, p.290]{KaAk}) for quasi-normed lattices.

\begin{theorem} 
\label{Amemiya completeness}
Let $X$ be a quasi-normed lattice. The following conditions are equivalent:
\begin{itemize}
\item [(i)] $X$ is complete.
\item [(ii)] For any positive increasing Cauchy sequence $(x_n)_n$ in $X$ there exists $\displaystyle\sup_{n} x_n \in X.$
\end{itemize}
\end{theorem}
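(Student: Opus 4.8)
The plan is to prove both implications, with (ii)$\Rightarrow$(i) carrying the weight. Fix a quasi-triangle constant $K\ge 1$ for $X$. For (i)$\Rightarrow$(ii) I would first record that in a quasi-normed lattice the positive cone is closed: if $x_k\to x$ with $x_k\ge 0$, then from the Riesz inequality $x^-=(-x)^+=((x_k-x)-x_k)^+\le(x_k-x)^++(-x_k)^+=(x_k-x)^+$ (using $x_k\ge 0$, so $(-x_k)^+=0$) and monotonicity of the quasi-norm we get $\|x^-\|_X\le\|x_k-x\|_X\to 0$, hence $x\ge 0$. Given a positive increasing Cauchy sequence $(x_n)_n$, completeness provides a limit $x\in X$; applying closedness of the cone to $x-x_n=\lim_m(x_m-x_n)\ge 0$ shows $x$ is an upper bound, and to $y-x=\lim_n(y-x_n)\ge 0$ for any upper bound $y$ shows $x=\sup_n x_n$.

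The heart of the matter is the following lemma, which I would isolate first: \emph{if $(v_k)_k$ is a positive increasing Cauchy sequence and $v:=\sup_k v_k$ exists in $X$, then $\|v-v_k\|_X\to 0$.} This is exactly where a na\"\i ve attempt stalls, since estimating the norm of the order-supremum $v-v_k=\sup_m(v_m-v_k)$ by the (small) norms of its truncations $v_m-v_k$ would require a Fatou-type property that $X$ need not have. The device I expect to be the main obstacle, and which breaks the circularity, is to invoke hypothesis (ii) on an \emph{amplified} series. Passing to a subsequence (harmless, as a Cauchy sequence with a convergent subsequence converges), I may assume $\|v_{k+1}-v_k\|_X\le(4K)^{-k}$ and set $a_k:=v_{k+1}-v_k\ge 0$. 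Then $\sum_k K^k2^k\|a_k\|_X\le\sum_k 2^{-k}<\infty$, so the positive increasing sequence $P_K:=\sum_{k=1}^K 2^k a_k$ is Cauchy, and by (ii) its supremum $P:=\sup_K P_K$ exists in $X$. Since $2^j a_j\le P_K\le P$ for every $K\ge j$, we get $a_j\le 2^{-j}P$, whence for $m\ge k$, $v_m-v_k=\sum_{j=k}^{m-1}a_j\le(\sum_{j\ge k}2^{-j})\,P=2^{-k+1}P$; taking the supremum over $m$ yields $0\le v-v_k\le 2^{-k+1}P$. Monotonicity of the quasi-norm then gives $\|v-v_k\|_X\le 2^{-k+1}\|P\|_X\to 0$, proving the lemma. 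The point is that (ii) is used not for $(v_k)$ itself but for the faster-growing series $\sum_k 2^k a_k$, which supplies a dominating element controlling every tail.

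With the lemma in hand, I would deduce (ii)$\Rightarrow$(i) through Theorem~\ref{Maligranda completeness}: it suffices to show that every sequence $(x_n)_n$ with $\sum_n K^n\|x_n\|_X<\infty$ has $\sum_n x_n\in X$. Writing $x_n=x_n^+-x_n^-$ and using $\|x_n^{\pm}\|_X\le\|x_n\|_X$, I reduce to a positive sequence. For positive $(x_n)_n$ the partial sums $s_N=\sum_{n=1}^N x_n$ increase and are Cauchy, since $\|s_M-s_N\|_X\le\sum_{n>N}K^{n}\|x_n\|_X\to 0$; by (ii) the supremum $s:=\sup_N s_N$ exists, and by the lemma $s_N\to s$, so $\sum_n x_n$ converges. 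This verifies condition (ii) of Theorem~\ref{Maligranda completeness}, yielding completeness, and the quantitative inequality stated there follows automatically.
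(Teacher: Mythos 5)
Your argument is correct, and its heart coincides with the paper's: both proofs break the apparent circularity in (ii)$\Rightarrow$(i) by invoking hypothesis (ii) a \emph{second} time, on a weighted series of consecutive differences, so as to manufacture a single dominating element whose order bounds control every tail, after which monotonicity of the quasi-norm finishes the convergence claim. The paper uses linear weights, $y_n=\sum_{i=1}^{n} i(x_{i+1}-x_i)$ with the subsequence chosen so that $\|x_{n+1}-x_n\|_X\leq K^{-n}n^{-3}$, obtaining $n(x-x_n)\leq y$ and the rate $\|x-x_n\|_X\leq n^{-1}\|y\|_X$; your geometric weights $2^k$ with decay $(4K)^{-k}$ give the termwise domination $a_j\leq 2^{-j}P$ and the rate $\|v-v_k\|_X\leq 2^{-k+1}\|P\|_X$ --- an immaterial difference. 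Where you genuinely diverge is the outer scaffolding: the paper reduces completeness to the convergence of positive increasing Cauchy sequences by citing \cite[Theorem 16.1]{AlBu} and then shows the given sequence converges to its supremum, whereas you route through Theorem \ref{Maligranda completeness}, splitting an absolutely convergent series via $x_n=x_n^+-x_n^-$ and applying (ii) together with your isolated lemma to the two sequences of positive partial sums. Your route is self-contained relative to the paper (it uses only a result already stated there) at the cost of the extra $x^{\pm}$ decomposition, while the paper's citation shortens the proof but leans on an external lattice-completeness criterion; your explicit cone-closedness argument for (i)$\Rightarrow$(ii) likewise fills in what the paper dismisses as evident.
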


\begin{proof}
(i) $\Rightarrow$ (ii) is evident because the limit of increasing convergent sequences in a quasi-normed lattice is always its supremum.

\noindent
(ii) $\Rightarrow$ (i) Let $(x_n)_n$ be a positive increasing Cauchy sequence in $X.$ It is sufficient to prove that $(x_n)_n$ is convergent in $X$ for $X$ being complete (see, for example \cite[Theorem 16.1]{AlBu}). By hypothesis, there exists $x:=\displaystyle\sup_{n} x_n \in X.$ We have to prove that $(x_n)_n$ converges to $x$ and for this it is enough the convergence of a subsequence of $(x_n)_n.$ So, let us take a subsequence of $(x_n)_n,$ that we still denote by $(x_n)_n,$ such that $\|x_{n+1}-x_n\|_{X}\leq \displaystyle\frac{1}{K^n n^3},$ for all $n\in\mathbb{N}$ where $K$ is a quasi-triangle constant of $X.$ Thus, the sequence $y_n:=\displaystyle\sum_{i=1}^{n} i (x_{i+1}-x_i)$ is positive, increasing and Cauchy. Indeed, given $m>n,$ we have
$$
\left\|y_m-y_n\right\|_{X} \leq \sum_{i=n+1}^{m} i K^{i-n} \|x_{i+1}-x_{i}\|_{X} \leq \frac{1}{K^n} \sum_{i=n+1}^{m} \frac{1}{i^2}\leq \sum_{i=n+1}^{m} \frac{1}{i^2}.
$$
Applying (ii) again, we deduce that there exists $y:=\displaystyle\sup_{n} y_n\in X.$ Moreover, given $n\in\mathbb{N},$ we have
\begin{eqnarray*}
n(x-x_n) & = & n \left(\sup_{m>n} x_{m+1} - x_n\right) \ = \ n \sup_{m>n} (x_{m+1}-x_n)
\\
& = &  n \sup_{m>n} \sum_{i=n}^{m} (x_{i+1}-x_i) \ \leq \ \sup_{m>n} y_n \ = \ y.
\end{eqnarray*}
Therefore, $0\leq x-x_n \leq \displaystyle\frac{1}{n}y$ \ and hence $\|x-x_n\|_{X}\leq \displaystyle\frac{1}{n}\|y\|_{X}\to 0.$
\end{proof}

Applying Theorem \ref{Amemiya completeness} to the sequence of partial sums of a given sequence, we see that completeness in quasi-normed lattices can still be cha\-racterized by a Riesz-Fischer type property.

\begin{corollary}
Let $X$ be a quasi-normed lattice with a quasi-triangle constant $K.$ The following conditions are equivalent:
\begin{itemize}
\item [(i)] $X$ is complete.
\item [(ii)] For every positive sequence $(x_n)_n\subseteq X$ such that $\displaystyle\sum_{n=1}^{\infty}K^n\|x_n\|_{X}<\infty$ there exists $\displaystyle\sup_{n} \sum_{i=1}^{n} x_i \in X.$
\end{itemize}
\end{corollary}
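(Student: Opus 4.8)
The plan is to derive both implications from Theorem~\ref{Amemiya completeness}, exploiting the observation that the partial sums of a positive sequence form a positive increasing sequence, so that the summability hypothesis $\sum_n K^n\|x_n\|_X<\infty$ is precisely what turns these partial sums into a Cauchy sequence. The bridge between the two statements is the elementary fact that, for a positive increasing sequence, being Cauchy and admitting a supremum in $X$ are exactly the notions that Theorem~\ref{Amemiya completeness} ties to completeness.

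For (i) $\Rightarrow$ (ii), I would start from a positive sequence $(x_n)_n$ with $\sum_{n}K^n\|x_n\|_X<\infty$ and consider its partial sums $s_n:=\sum_{i=1}^n x_i$. These are positive and increasing because each $x_i\geq 0$. To see they are Cauchy, I would iterate the quasi-triangle inequality (Q3) to obtain, for $m>n$,
$$
\|s_m-s_n\|_X=\Bigl\|\sum_{i=n+1}^{m} x_i\Bigr\|_X\leq\sum_{i=n+1}^{m} K^{i-n}\|x_i\|_X\leq\sum_{i=n+1}^{m} K^{i}\|x_i\|_X,
$$
whose right-hand side is the tail of a convergent series and hence tends to $0$. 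Since $X$ is complete, Theorem~\ref{Amemiya completeness} applied to $(s_n)_n$ yields $\sup_n s_n=\sup_n\sum_{i=1}^n x_i\in X$, which is exactly (ii).

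For (ii) $\Rightarrow$ (i), I would again invoke Theorem~\ref{Amemiya completeness} and show that every positive increasing Cauchy sequence $(x_n)_n$ has a supremum in $X$. The idea is to realize a subsequence of $(x_n)_n$ as the sequence of partial sums of a positive summable sequence. Using that $(x_n)_n$ is Cauchy, I would extract a subsequence $(x_{n_k})_k$ with $\|x_{n_{k+1}}-x_{n_k}\|_X\leq K^{-k}2^{-k}$, set $y_k:=x_{n_{k+1}}-x_{n_k}\geq 0$, and note that $\sum_k K^k\|y_k\|_X<\infty$. Hypothesis (ii) then provides $\sup_n\sum_{k=1}^n y_k\in X$; since $\sum_{k=1}^n y_k=x_{n_{n+1}}-x_{n_1}$ telescopes, adding the fixed element $x_{n_1}\in X$ gives $\sup_k x_{n_k}\in X$, and this coincides with $\sup_n x_n$ because $(x_n)_n$ is increasing. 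Theorem~\ref{Amemiya completeness} then yields the completeness of $X$.

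The routine parts are the iterated use of (Q3) and the verification that the various suprema live in $X$; the step demanding the most care is the subsequence extraction in (ii) $\Rightarrow$ (i), where one must choose $(n_k)_k$ so that the successive differences remain summable after the $K^k$ weighting, and then correctly recover $\sup_n x_n$ from the telescoped partial sums of the subsequence. Ensuring that passing to a subsequence does not lose the supremum --- valid precisely because the sequence is increasing --- is the conceptual crux of the argument.
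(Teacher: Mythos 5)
Your proposal is correct and takes essentially the same approach as the paper, which proves the corollary in one line by applying Theorem~\ref{Amemiya completeness} to sequences of partial sums; your two directions (the iterated (Q3) estimate making the partial sums Cauchy, and the subsequence extraction with weights $K^{-k}2^{-k}$ followed by telescoping) are exactly the details that sentence leaves implicit, and they are all sound, including the observation that passing to a subsequence preserves the supremum of an increasing sequence.
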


\section{Orlicz spaces $X^{\Phi}$}

In this section we introduce the Orlicz spaces $X^{\Phi}$ associated to a quasi-Banach function space $X$ and a Young function $\Phi$ and obtain their main properties.

Recall that a {\em Young function\/} is any function $\Phi:[0,\infty)\rightarrow[0,\infty)$ which is strictly increasing, continuous, convex, $\Phi(0)=0$ and $\displaystyle\lim_{t\to\infty}\Phi(t)=\infty.$ A Young function $\Phi$ satisfies the following useful inequalities (which we shall use without explicit mention) for all $t\geq 0$:
$$
\left\{
\begin{array}{ccl}
\Phi(\alpha t) \leq \alpha \, \Phi(t) & \mbox{ if } & 0\leq\alpha\leq 1,
\\
\Phi(\alpha t) \geq \alpha \, \Phi(t) & \mbox{ if } & \alpha\geq 1.
\end{array}
\right.
$$
In particular, from the second of the previous inequalities it follows that for all  $t_0>0$ there exists $C>0$ such that $\Phi(t)\geq C t$ for all $t\geq t_0.$ For a given $t_0>0,$ just take $\displaystyle C:=\frac{\Phi(t_0)}{t_0}>0$ and observe that 
$\displaystyle
\Phi(t) = \Phi\left(t_0\frac{t}{t_0}\right)\geq \frac{t}{t_0}\Phi(t_0) = C t$
for all $t\geq t_0.$

Moreover, it is easy to prove using the convexity of $\Phi$ that
\begin{equation}\label{quasi-subaditivity}
\Phi\left( \sum_{n=1}^{N} t_n \right) \leq \sum_{n=1}^{N} \frac{1}{2^n \alpha^n} \Phi(2^n \alpha^n t_n)
\end{equation}
for all $N\in\mathbb{N},$ $\alpha\geq 1$ and $t_1,\dots,t_N\geq 0.$

A Young function $\Phi$ has the {\em $\Delta_2$-property\/}, written $\Phi\in\Delta_2,$ if there exists a constant $C>1$ such that $\Phi(2t)\leq C\Phi(t)$ for all $t\geq 0.$ Equivalently, $\Phi\in\Delta_2$ if for any $c>1$ there exists $C>1$ such that $\Phi(ct)\leq C\Phi(t),$ for all $t\geq 0.$

\begin{definition}
Let $\Phi$ be a Young function. Given a quasi-normed function space $X$ over $\mu,$ the corresponding (generalized) {\em Orlicz class\/} $\widetilde{X}^{\Phi}$ is defined as the following set of ($\mu$-a.e. equivalence classes of) measurable functions:
$$
\widetilde{X}^{\Phi} := \left\{ f\in L^0(\mu) : \Phi(|f|)\in X \right\}.
$$
\end{definition}

\begin{proposition} 
\label{OC properties}
Let $\Phi$ be a Young function and $X$ be a quasi-normed function space over $\mu.$ Then, $\widetilde{X}^{\Phi}$ is a solid convex set in $L^0(\mu).$ Moreover, $\widetilde{X}^{\Phi} \subseteq X.$
\end{proposition}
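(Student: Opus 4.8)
The plan is to verify the three assertions—solidity, convexity, and the inclusion $\widetilde{X}^{\Phi}\subseteq X$—in turn, each time reducing the claim to a pointwise inequality for $\Phi$ combined with the ideal (solid) structure of $X$ in $L^0(\mu)$.

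For solidity I would start from $f\in\widetilde{X}^{\Phi}$ and $g\in L^0(\mu)$ with $|g|\le|f|$ $\mu$-a.e. Since $\Phi$ is increasing, this yields $\Phi(|g|)\le\Phi(|f|)$ $\mu$-a.e.; as $\Phi(|f|)\in X$ and $X$ is an ideal, solidity gives $\Phi(|g|)\in X$, that is, $g\in\widetilde{X}^{\Phi}$.

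For convexity, given $f,g\in\widetilde{X}^{\Phi}$ and $0\le\lambda\le 1$, I would combine the triangle inequality $|\lambda f+(1-\lambda)g|\le\lambda|f|+(1-\lambda)|g|$ with the monotonicity and convexity of $\Phi$ to obtain the pointwise bound $\Phi(|\lambda f+(1-\lambda)g|)\le\lambda\Phi(|f|)+(1-\lambda)\Phi(|g|)$. The right-hand side lies in $X$ because $X$ is a vector space containing both $\Phi(|f|)$ and $\Phi(|g|)$; solidity then forces $\Phi(|\lambda f+(1-\lambda)g|)\in X$, so $\lambda f+(1-\lambda)g\in\widetilde{X}^{\Phi}$.

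The inclusion $\widetilde{X}^{\Phi}\subseteq X$ is the step I expect to require the most care, since it is where the function-space structure (namely $\chi_{\Omega}\in X$) and the linear lower bound on $\Phi$ enter. Taking $f\in\widetilde{X}^{\Phi}$, I would split $\Omega$ into the sets $[|f|\le 1]$ and $[|f|>1]$. On $[|f|\le 1]$ one has $|f|\chi_{[|f|\le 1]}\le\chi_{\Omega}$, and since $\chi_{\Omega}\in X$ by axiom (b), solidity gives $|f|\chi_{[|f|\le 1]}\in X$. On $[|f|>1]$ I would invoke the inequality $\Phi(t)\ge Ct$ valid for $t\ge t_0=1$ with $C=\Phi(1)$, established in the text, to obtain $|f|\chi_{[|f|>1]}\le C^{-1}\Phi(|f|)$; as $\Phi(|f|)\in X$, solidity again yields $|f|\chi_{[|f|>1]}\in X$. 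Adding the two pieces shows $|f|\in X$, and a final appeal to the ideal property gives $f\in X$. The only genuine subtlety is to apply the linear minorization of $\Phi$ only on the region where it holds, which the decomposition at the threshold $t_0=1$ handles cleanly.
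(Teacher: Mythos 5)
Your proposal is correct and follows essentially the same route as the paper: solidity and convexity via monotonicity/convexity of $\Phi$ plus the ideal property of $X$, and the inclusion $\widetilde{X}^{\Phi}\subseteq X$ via the same decomposition of $\Omega$ at the threshold $t_0=1$ together with the linear minorization $\Phi(t)\geq Ct$ for $t\geq 1$. The only cosmetic difference is that the paper packages the last step as the single pointwise bound $|f|\leq \frac{1}{C}\Phi(|f|)+\chi_{\Omega}$ rather than treating the two pieces as separate memberships in $X$.
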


\begin{proof}
Let $f,g\in \widetilde{X}^{\Phi}$ and $0\leq\alpha\leq 1.$ According to the convexity and monotonicity properties of $\Phi$ we have
$$
\Phi(|\alpha f + (1-\alpha)g|) \leq \alpha \Phi(|f|) + (1-\alpha)\Phi(|g|) \in X.
$$
The ideal property of $X$ yields $\Phi(|\alpha f + (1-\alpha)g|)\in X$ which means that 
$$
\alpha f + (1-\alpha)g \in \widetilde{X}^{\Phi}
$$ 
and proves the convexity of $\widetilde{X}^{\Phi}.$ Clearly, $\widetilde{X}^{\Phi}$ is solid, since $|h|\leq |f|$ implies that $\Phi(|h|)\leq \Phi(|f|)\in X,$ for any $h\in L^0(\mu).$ Moreover, since $\Phi$ is convex function, there exists $C>0$ such that $\Phi(t)\geq C t,$ for all $t > 1.$ Thus, for all $f\in \widetilde{X}^{\Phi},$
$$
|f| = |f|\chi_{\left[|f|>1\right]} + |f|\chi_{\left[|f|\leq 1\right]} \leq \frac{1}{C}\Phi\left(|f|\chi_{\left[|f| > 1\right]}\right) + \chi_{\Omega} \leq  \frac{1}{C}\Phi\left(|f|\right) + \chi_{\Omega} \in X,
$$
which gives $f\in X.$
\end{proof}

\begin{definition}
Let $\Phi$ be a Young function. Given a quasi-normed function space $X$ over $\mu,$ the corresponding (generalized) {\em Orlicz space\/} $X^{\Phi}$ is defined as the following set of ($\mu$-a.e. equivalence classes of) measurable functions:
$$
X^{\Phi}:= \left\{ f\in L^0(\mu) : \exists \, c >0 : \frac{|f|}{c}\in\widetilde{X}^{\Phi} \right\}.
$$
\end{definition}

\begin{proposition} 
\label{OS properties}
Let $\Phi$ be a Young function and $X$ be a quasi-normed function space over $\mu.$ Then, $X^{\Phi}$ is a linear space, an ideal in $L^0(\mu)$ and $\widetilde{X}^{\Phi} \subseteq X^{\Phi}\subseteq X.$
\end{proposition}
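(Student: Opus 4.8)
The plan is to deduce every assertion directly from the structure of the Orlicz class $\widetilde{X}^{\Phi}$ recorded in Proposition \ref{OC properties}, namely that $\widetilde{X}^{\Phi}$ is a solid convex subset of $L^0(\mu)$ contained in $X$. The two inclusions come essentially for free. For $\widetilde{X}^{\Phi}\subseteq X^{\Phi}$ one just takes $c=1$ in the definition of $X^{\Phi}$. For $X^{\Phi}\subseteq X$, if $f\in X^{\Phi}$ then $|f|/c\in\widetilde{X}^{\Phi}\subseteq X$ for some $c>0$; multiplying by the scalar $c$ and using that $X$ is an ideal in $L^0(\mu)$ (hence stable under scalar multiples and absolute values) yields $f\in X$.

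For the ideal property I would take $f\in X^{\Phi}$ and $g\in L^0(\mu)$ with $|g|\leq|f|$. Choosing $c>0$ so that $|f|/c\in\widetilde{X}^{\Phi}$, the inequality $|g|/c\leq|f|/c$ together with the solidity of $\widetilde{X}^{\Phi}$ forces $|g|/c\in\widetilde{X}^{\Phi}$, whence $g\in X^{\Phi}$. Closure under scalar multiplication is equally quick: if $|f|/c\in\widetilde{X}^{\Phi}$ and $\lambda\neq 0$, then $|\lambda f|/(|\lambda|c)=|f|/c\in\widetilde{X}^{\Phi}$, so $\lambda f\in X^{\Phi}$, the case $\lambda=0$ being trivial.

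The only part needing a genuine idea is closure under addition, where the convexity of $\widetilde{X}^{\Phi}$ enters. Given $f,g\in X^{\Phi}$, pick $c_1,c_2>0$ with $|f|/c_1,|g|/c_2\in\widetilde{X}^{\Phi}$. The key step is to set $c:=c_1+c_2$ and observe the pointwise estimate
$$
\frac{|f+g|}{c} \leq \frac{c_1}{c_1+c_2}\cdot\frac{|f|}{c_1} + \frac{c_2}{c_1+c_2}\cdot\frac{|g|}{c_2},
$$
which exhibits $|f+g|/c$ as dominated by a convex combination of two elements of $\widetilde{X}^{\Phi}$. Since $\widetilde{X}^{\Phi}$ is convex, that convex combination lies in $\widetilde{X}^{\Phi}$; since $\widetilde{X}^{\Phi}$ is solid, so does $|f+g|/c$, and therefore $f+g\in X^{\Phi}$. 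The main (mild) obstacle is recognizing that the correct normalizing constant is $c_1+c_2$ rather than, say, $\max(c_1,c_2)$: this is precisely what makes the coefficients in the displayed inequality sum to one, so that convexity of $\widetilde{X}^{\Phi}$ can be invoked cleanly instead of any $\Delta_2$-type growth hypothesis on $\Phi$.
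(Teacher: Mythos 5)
Your proof is correct and follows essentially the same route as the paper: every claim is deduced from the convexity and solidity of $\widetilde{X}^{\Phi}$ established in Proposition \ref{OC properties}, with the ideal property and the inclusions handled identically. The only (cosmetic) differences are that you normalize sums by $c_1+c_2$ with weights $\frac{c_1}{c_1+c_2},\frac{c_2}{c_1+c_2}$ where the paper uses $2\max\{c_1,c_2\}$ with equal weights plus a monotonicity step, and your one-line homogeneity argument $\frac{|\lambda f|}{|\lambda| c}=\frac{|f|}{c}$ is in fact slightly cleaner than the paper's detour through integer multiples $n_0\geq|\alpha|$.
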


\begin{proof}
Let $f,g\in X^{\Phi}$ and $\alpha\in\mathbb{R}.$ Then, there exist $c_1, c_2>0$ such that $\displaystyle\frac{|f|}{c_1}, \displaystyle\frac{|g|}{c_2}\in \widetilde{X}^{\Phi}.$ Setting $c:=\max\{c_1,c_2\}$ and using the convexity of $\widetilde{X}^{\Phi}$ we have
$$
\frac{|f+g|}{2c} \leq \frac{1}{2}\frac{|f|}{c} + \frac{1}{2}\frac{|g|}{c} \leq \frac{1}{2}\frac{|f|}{c_1} + \frac{1}{2}\frac{|g|}{c_2} \in \widetilde{X}^{\Phi}
$$
and hence $\displaystyle\frac{|f+g|}{2c} \in \widetilde{X}^{\Phi}$ since $\widetilde{X}^{\Phi}$ is solid, which proves that $f+g\in X^{\Phi}.$ Note that this also implies that $nf\in X^{\Phi}$ for any $n\in\mathbb{N}.$ Taking $n_0\in\mathbb{N}$ such that $|\alpha|\leq n_0,$ it follows that there exists $c_0>0$ such that $\displaystyle\frac{|\alpha f|}{c_0}\leq \frac{n_0|f|}{c_0} \in \widetilde{X}^{\Phi},$ which yields $\displaystyle\frac{|\alpha f|}{c_0}\in \widetilde{X}^{\Phi}$ and so $\alpha f\in X^{\Phi}.$

It is evident that $\widetilde{X}^{\Phi} \subseteq X^{\Phi}$ and $X^{\Phi}$ inherits the ideal property from $\widetilde{X}^{\Phi},$ since $|h|\leq |f|$ implies that $\displaystyle\frac{|h|}{c_1} \leq \frac{|f|}{c_1}\in \widetilde{X}^{\Phi}$ for any $h\in L^0(\mu).$ Moreover, taking into account Proposition \ref{OC properties}, we have $\displaystyle\frac{|f|}{c_1}\in\widetilde{X}^{\Phi} \subseteq X$ and so $f\in X$ which proves that $X^{\Phi} \subseteq X.$
\end{proof}

\begin{definition}
Let $\Phi$ be a Young function and $X$ be a quasi-normed function space over $\mu.$ Given $f\in X^{\Phi},$ we define
$$
\|f\|_{X^{\Phi}} := \inf \left\{ k>0 : \frac{|f|}{k}\in\widetilde{X}^{\Phi} \mbox{ with } \left\| \Phi\left(\frac{|f|}{k}\right) \right\|_{X}\leq 1 \right\}.
$$
The functional \mbox{$\|\cdot\|_{X^{\Phi}}$} in $X^{\Phi}$ is called the Luxemburg quasi-norm.
\end{definition}

\begin{proposition} 
\label{LQN properties}
Let $\Phi$ be a Young function and $X$ be a quasi-normed function space (respectively, normed function space) over $\mu.$ Then, \mbox{$\|\cdot\|_{X^{\Phi}}$} is a quasi-norm (respectively, norm) in $X^{\Phi}.$ Moreover, $X^{\Phi}$ equipped with the Luxemburg quasi-norm, is a quasi-normed (respectively, normed) function space over $\mu.$
\end{proposition}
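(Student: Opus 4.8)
The plan is to verify the quasi-norm axioms (Q1)--(Q3) for $\|\cdot\|_{X^{\Phi}}$ one at a time and then the two structural conditions (a)--(b) of a quasi-normed function space over $\mu$, using throughout only the convexity and monotonicity of $\Phi$, the two scaling inequalities recorded for Young functions, and the ideal/lattice structure of $X$ together with its quasi-triangle constant $K$. A preliminary observation is that for fixed $f$ the admissible set $\{k>0:\Phi(|f|/k)\in X,\ \|\Phi(|f|/k)\|_{X}\le 1\}$ is upward closed: if $k$ is admissible and $k'\ge k$, then $\Phi(|f|/k')\le\Phi(|f|/k)$ pointwise, so $k'$ is admissible by the ideal property and monotonicity of $\|\cdot\|_{X}$. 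Hence $\|f\|_{X^{\Phi}}$ is the left endpoint of a ray, and it is finite on $X^{\Phi}$: if $\Phi(|f|/c)\in X$, then since $\Phi(|f|/(\alpha c))\le\frac1\alpha\Phi(|f|/c)$ for $\alpha\ge 1$, choosing $\alpha\ge\max\{1,\|\Phi(|f|/c)\|_{X}\}$ makes $k=\alpha c$ admissible.

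Homogeneity (Q2) is immediate from the substitution $k\mapsto|\alpha|k$ in the defining infimum, with $\alpha=0$ handled by $\Phi(0)=0$ and $0\in X$. For definiteness (Q1) the nontrivial implication is $\|f\|_{X^{\Phi}}=0\Rightarrow f=0$, which I would prove by contradiction. If $f\ne 0$ in $L^{0}(\mu)$, then $\mu([|f|>\delta])>0$ for some $\delta>0$; setting $A:=[|f|>\delta]$, we have $\chi_{A}\le\chi_{\Omega}\in X$, so $\chi_{A}\in X$ with $\|\chi_{A}\|_{X}>0$. For every admissible $k$ the pointwise bound $\Phi(\delta/k)\chi_{A}\le\Phi(|f|/k)$ forces $\Phi(\delta/k)\|\chi_{A}\|_{X}\le 1$, whence $k\ge\delta/\Phi^{-1}(1/\|\chi_{A}\|_{X})>0$; this positive lower bound on admissible $k$ rules out $\|f\|_{X^{\Phi}}=0$.

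The quasi-triangle inequality (Q3) is the heart of the matter, and the main obstacle, precisely because $\|\cdot\|_{X}$ is only a quasi-norm and $X$ need not have the $\sigma$-Fatou property, so the infimum defining $\|f\|_{X^{\Phi}}$ need not be attained and the classical Luxemburg computation does not close. The plan is to work with near-optimal scalars. Given $f,g\in X^{\Phi}$ and $\varepsilon>0$, pick admissible $k_{1}<\|f\|_{X^{\Phi}}+\varepsilon$ and $k_{2}<\|g\|_{X^{\Phi}}+\varepsilon$; setting $\lambda:=k_{1}/(k_{1}+k_{2})$ and using convexity,
$$
\Phi\!\left(\frac{|f+g|}{k_{1}+k_{2}}\right)\le \lambda\,\Phi\!\left(\frac{|f|}{k_{1}}\right)+(1-\lambda)\,\Phi\!\left(\frac{|g|}{k_{2}}\right)\in X.
$$
Taking $X$-quasi-norms and invoking (Q3) for $X$ yields only $\|\Phi(|f+g|/(k_{1}+k_{2}))\|_{X}\le K(\lambda+(1-\lambda))=K$, which is in general larger than $1$. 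The key idea to recover admissibility is to rescale by exactly $K$: from $\Phi(t/K)\le\frac1K\Phi(t)$ we obtain $\Phi(|f+g|/(K(k_{1}+k_{2})))\le\frac1K\Phi(|f+g|/(k_{1}+k_{2}))$, hence $\|\Phi(|f+g|/(K(k_{1}+k_{2})))\|_{X}\le 1$. Thus $K(k_{1}+k_{2})$ is admissible for $f+g$, giving $\|f+g\|_{X^{\Phi}}\le K(k_{1}+k_{2})<K(\|f\|_{X^{\Phi}}+\|g\|_{X^{\Phi}}+2\varepsilon)$; letting $\varepsilon\to 0$ proves (Q3) with the same constant $K$.

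It then remains to check that $X^{\Phi}$ is a quasi-normed function space over $\mu$. The ideal property in $L^{0}(\mu)$ is already contained in Proposition~\ref{OS properties}; for the lattice monotonicity of the quasi-norm, if $|f|\le|g|$ then $\Phi(|f|/k)\le\Phi(|g|/k)$ shows every admissible $k$ for $g$ is admissible for $f$, so $\|f\|_{X^{\Phi}}\le\|g\|_{X^{\Phi}}$. For condition (b), $\Phi(\chi_{\Omega}/k)=\Phi(1/k)\chi_{\Omega}\in X$ for every $k>0$, and since $\Phi(1/k)\to 0$ as $k\to\infty$ we may choose $k$ with $\Phi(1/k)\|\chi_{\Omega}\|_{X}\le 1$, so $\chi_{\Omega}\in X^{\Phi}$ with finite norm. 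Finally, when $\|\cdot\|_{X}$ is a norm one has $K=1$, every estimate above reduces to the exact triangle inequality, and $\|\cdot\|_{X^{\Phi}}$ is a norm, so $X^{\Phi}$ is a normed function space over $\mu$.
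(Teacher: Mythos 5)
Your proof is correct and follows essentially the same route as the paper's: finiteness of the infimum via rescaling by $\max\{1,\|\Phi(|f|/c)\|_{X}\}$, definiteness by testing against $\chi_{A}$ on a set where $|f|\geq\delta$, the quasi-triangle inequality with the same constant $K$ by convexity of $\Phi$ combined with $\Phi(t/K)\leq\frac{1}{K}\Phi(t)$ to cancel the quasi-triangle constant of $X$, and the lattice property and $\chi_{\Omega}\in X^{\Phi}$ exactly as in the paper. Your quantitative lower bound $k\geq\delta/\Phi^{-1}(1/\|\chi_{A}\|_{X})$ in (Q1) and the use of $\varepsilon$-near-optimal admissible scalars in (Q3) are only cosmetic rearrangements of the paper's argument, which works with arbitrary admissible $a,b$ and passes to the infimum.
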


\begin{proof}
First, note that $\|\cdot\|_{X^{\Phi}}:X^{\Phi}\to [0,\infty).$ Given $f\in X^{\Phi},$ there exists $c>0$ such that $\Phi \displaystyle\left(\frac{|f|}{c}\right)\in X.$ Let $M:=\displaystyle\left\|\Phi\left(\frac{|f|}{c}\right)\right\|_X<\infty.$ On the one hand, if $M\leq 1$ then $\|f\|_{X^{\Phi}}\leq c < \infty.$ On the other hand, if $M>1$ then $\displaystyle\Phi\left(\frac{|f|}{Mc}\right)\leq \frac{1}{M}\Phi\left(\frac{|f|}{c}\right)\in X$ and so $\displaystyle\left\|\Phi\left(\frac{|f|}{Mc}\right)\right\|_X \leq \frac{1}{M} \left\|\Phi\left(\frac{|f|}{c}\right)\right\|=1,$ which implies that $\|f\|_{X^{\Phi}}\leq Mc < \infty.$

\noindent
If $f=0,$ then $\displaystyle\left\|\Phi\left(\frac{|f|}{c}\right)\right\|_X=0\leq 1$ for all $c>0$ and so $\|f\|_{X^{\Phi}}=0.$ Now, suppose that $\|f\|_{X^{\Phi}}=0$ and that $\mu\left( \left[f\neq 0\right]\right)>0,$ that is, $\displaystyle\left\|\Phi\left(\frac{|f|}{c}\right)\right\|_X\leq 1$ for all $c>0$ and there exist $\varepsilon>0$ and $A\in\Sigma$ such that $\mu(A)>0$ and $|f|\chi_A \geq \varepsilon\chi_A.$ Given $c>0,$ we have
$\displaystyle
\Phi\left(\frac{\varepsilon}{c}\right)\chi_A \leq \Phi\left(\frac{|f|\chi_A}{c}\right) \leq \Phi\left(\frac{|f|}{c}\right).$
Therefore,
$\displaystyle
\left\|\Phi\left(\frac{|f|}{c}\right)\right\|_X \geq \left\|\Phi\left(\frac{\varepsilon}{c}\right)\chi_A \right\|_X = \Phi\left(\frac{\varepsilon}{c}\right) \|\chi_A\|_X
$
and keeping in mind that $\displaystyle\lim_{t\to\infty}\Phi(t)=\infty,$ we can take $c>0$ such that 
$\displaystyle
\Phi\left(\frac{\varepsilon}{c}\right) \|\chi_A\|_X > 1
$ 
which yields a contradiction.

On the other hand, given $f\in X^{\Phi}$ and $\lambda\in\mathbb{R},$ it is clear that
\begin{eqnarray*}
\|\lambda f\|_{X^{\Phi}} & = &  \inf \left\{ k>0 :  \left\| \Phi\left(\frac{|\lambda f|}{k}\right) \right\|_{X}\leq 1 \right\} \\
& = & \inf \left\{ k>0 :  \left\| \Phi\left(\frac{|f|}{\frac{k}{|\lambda|}}\right) \right\|_{X}\leq 1 \right\} \\
& = &  |\lambda| \inf \left\{ \frac{k}{|\lambda|}>0 :  \left\| \Phi\left(\frac{|f|}{\frac{k}{|\lambda|}}\right) \right\|_{X}\leq 1 \right\} \ = \ |\lambda| \|f\|_{X^{\Phi}}.
\end{eqnarray*}

Now, let $f, g\in X^{\Phi}$ and take $K\geq 1$ as in (Q3). Given $a, b>0$ such that $\displaystyle\left\| \Phi\left(\frac{|f|}{a}\right) \right\|_{X}\leq 1$ and $\displaystyle\left\| \Phi\left(\frac{|g|}{b}\right) \right\|_{X}\leq 1,$ we have
\begin{eqnarray*}
\Phi\left(\frac{|f+g|}{K(a+b)}\right) & \leq & \frac{1}{K} \ \Phi\left(\frac{|f+g|}{a+b}\right) \ \leq \ \frac{1}{K} \ \Phi\left( \frac{a}{(a+b)}\frac{|f|}{a} + \frac{b}{(a+b)}\frac{|g|}{b}\right) \\
& \leq & \frac{1}{K} \frac{a}{(a+b)} \Phi\left(\frac{|f|}{a}\right) + \frac{1}{K} \frac{b}{(a+b)} \Phi\left(\frac{|g|}{b}\right)
\end{eqnarray*}
Hence,
$$
\left\| \Phi\left(\frac{|f+g|}{K(a+b)}\right) \right\|_{X} \leq \frac{a}{(a+b)} \left\|\Phi\left(\frac{|f|}{a}\right)\right\|_{X} + \frac{b}{(a+b)} \left\|\Phi\left(\frac{|g|}{b}\right)\right\|_{X} \leq 1
$$
which implies that $\|f+g\|_{X^{\Phi}} \leq K(a+b).$ By the arbitrariness of $a$ and $b$ we deduce that $\|f+g\|_{X^{\Phi}} \leq K(\|f\|_{X^{\Phi}}+\|g\|_{X^{\Phi}}).$ 

Thus, we have proved that $\|\cdot\|_{X^{\Phi}}$ is a quasi-norm in $X^{\Phi}$ with the same quasi-triangle constant as the one of the quasi-norm of $X.$ Moreover, we have already proved that $X^{\Phi}$ equipped with the Luxemburg quasi-norm is a quasi-normed space and an ideal in $L^0(\mu).$ It is also clear that the Luxemburg quasi-norm is a lattice quasi-norm: $|f|\leq |g|$ implies that $\displaystyle\Phi\left(\frac{|f|}{k}\right) \leq  \Phi\left(\frac{|g|}{k}\right)$ for all $k>0$  and this guarantees that $\|f\|_{X^{\Phi}} \leq \|g\|_{X^{\Phi}}.$ In addition, $\chi_{\Omega}\in X^{\Phi},$ since $\displaystyle\Phi\left(\frac{|\chi_{\Omega}|}{c}\right) = \Phi\left(\frac{1}{c}\right)\chi_{\Omega}\in X,$ for all $c>0,$ and hence $X^{\Phi}$ is in fact a quasi-normed function space.
\end{proof}

\begin{remark} 
\label{cont inclusion}
The inclusion of $X^{\Phi}\subseteq X$ is continuous provided $X$ and $X^{\Phi}$ be q-B.f.s. We will see in Theorem \ref{OS completeness} that the completeness is transferred from $X$ to $X^{\Phi}.$
\end{remark}

Once we have checked that $X^{\Phi}$ is a quasi-normed function space, it is immediate that $L^{\infty}(\mu)$ is contained in $X^{\Phi}$ and this inclusion is continuous with norm $\|\chi_{\Omega}\|_{X^{\Phi}}.$ The next result establishes the relation between the norm of this inclusion and the norm $\|\chi_{\Omega}\|_{X}$ of the continuous inclusion of  $L^{\infty}(\mu)$ into $X.$

\begin{lemma} 
\label{Linfinty inclusion in OS}
Let $\Phi$ be a Young function and $X$ be a quasi-normed function space over $\mu.$
\begin{itemize}
\item[(i)] For all $A\in\Sigma$ with $\mu(A)>0,$ $\|\chi_A\|_{X^{\Phi}} = \displaystyle\frac{1}{\Phi^{-1}\left( \frac{1}{\|\chi_A\|_X} \right)}.$
\item[(ii)] For all $f\in L^{\infty}(\mu),$ $\|f\|_{X^{\Phi}} \leq \displaystyle\frac{\|f\|_{L^{\infty}(\mu)}}{\Phi^{-1}\left( \frac{1}{\|\chi_{\Omega}\|_X} \right)}.$
\end{itemize}
\end{lemma}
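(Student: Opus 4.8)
The plan is to reduce both parts to a short computation with the Luxemburg quasi-norm, exploiting the fact that characteristic functions interact very simply with $\Phi.$

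For (i), I would begin directly from the definition of $\|\chi_A\|_{X^{\Phi}}.$ The key observation is that for every $k>0$ one has $\Phi\left(\frac{\chi_A}{k}\right) = \Phi\left(\frac{1}{k}\right)\chi_A,$ because $\chi_A$ takes only the values $0$ and $1$ and $\Phi(0)=0.$ Since $\chi_A\in X$ (by the ideal property, as $\chi_A\leq\chi_{\Omega}\in X$), this function belongs to $X$ and $\left\|\Phi\left(\frac{\chi_A}{k}\right)\right\|_X = \Phi\left(\frac{1}{k}\right)\|\chi_A\|_X.$ Hence the defining constraint $\left\|\Phi\left(\frac{\chi_A}{k}\right)\right\|_X\leq 1$ becomes $\Phi\left(\frac{1}{k}\right)\leq \frac{1}{\|\chi_A\|_X}.$ Using that a Young function is continuous and strictly increasing with $\Phi(0)=0$ and $\lim_{t\to\infty}\Phi(t)=\infty,$ hence a bijection of $[0,\infty)$ onto itself with strictly increasing continuous inverse $\Phi^{-1},$ this constraint is equivalent to $k\geq \frac{1}{\Phi^{-1}\left(1/\|\chi_A\|_X\right)}.$ Taking the infimum yields the claimed value; moreover $k_0:=\frac{1}{\Phi^{-1}\left(1/\|\chi_A\|_X\right)}$ is itself admissible because, by continuity, it produces the equality $\Phi\left(\frac{1}{k_0}\right)\|\chi_A\|_X = 1.$

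For (ii), I would simply combine (i) with the lattice and homogeneity properties of $\|\cdot\|_{X^{\Phi}}$ established in Proposition \ref{LQN properties}. Indeed, for $f\in L^{\infty}(\mu)$ we have $|f|\leq \|f\|_{L^{\infty}(\mu)}\,\chi_{\Omega}$ $\mu$-a.e., so monotonicity of the quasi-norm and homogeneity give $\|f\|_{X^{\Phi}}\leq \|f\|_{L^{\infty}(\mu)}\,\|\chi_{\Omega}\|_{X^{\Phi}}.$ Applying part (i) with $A=\Omega$ evaluates $\|\chi_{\Omega}\|_{X^{\Phi}} = \frac{1}{\Phi^{-1}\left(1/\|\chi_{\Omega}\|_X\right)},$ which is exactly the asserted bound.

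The only delicate point is the invertibility step in (i): one must check that $\Phi^{-1}$ is applied to an admissible argument and that the resulting quantity is strictly positive so that the reciprocal is legitimate. Here $\frac{1}{\|\chi_A\|_X}$ is a strictly positive real (finite since $\chi_A\in X,$ and positive since $\mu(A)>0$ forces $\|\chi_A\|_X>0$ by axiom (Q1)), it lies in the range $[0,\infty)$ of $\Phi,$ and $\Phi^{-1}\left(1/\|\chi_A\|_X\right)>\Phi^{-1}(0)=0$ by strict monotonicity. This is where all the Young-function hypotheses are genuinely used; everything else is a routine unwinding of the definitions, so I expect no substantial obstacle.
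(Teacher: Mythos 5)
Your proposal is correct and follows essentially the same route as the paper: in (i) you unwind the Luxemburg quasi-norm using $\Phi\left(\frac{\chi_A}{k}\right)=\Phi\left(\frac{1}{k}\right)\chi_A$ and invert the constraint $\Phi\left(\frac{1}{k}\right)\leq \frac{1}{\|\chi_A\|_X}$ (the paper phrases this as two separate inequalities, showing $\alpha:=\frac{1}{\Phi^{-1}\left(1/\|\chi_A\|_X\right)}$ is admissible and that every admissible $k$ satisfies $k\geq\alpha$, but the content is identical), and in (ii) you use $|f|\leq\|f\|_{L^{\infty}(\mu)}\chi_{\Omega}$ together with part (i) for $A=\Omega,$ exactly as the paper does. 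Your added remarks on the positivity of $\|\chi_A\|_X$ and the bijectivity of $\Phi$ are sound and fill in details the paper leaves implicit.
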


\begin{proof}
(i) Write $\alpha:=\displaystyle\frac{1}{\Phi^{-1}\left( \frac{1}{\|\chi_A\|_X} \right)}.$ On the one hand,
$$
\left\| \Phi\left(\frac{|\chi_A|}{\alpha}\right) \right\|_{X} = \Phi\left(\frac{1}{\alpha}\right) \|\chi_A\|_{X} = \Phi\left(\Phi^{-1}\left( \frac{1}{\|\chi_A\|_X} \right) \right) \|\chi_A\|_{X} = 1,
$$
and so $\|\chi_A\|_{X^{\Phi}}\leq \alpha.$ On the other hand, given $k>0$ such that $\displaystyle\frac{\chi_A}{k}\in\widetilde{X}^{\Phi}$ with $\displaystyle\left\| \Phi\left(\frac{\chi_A}{k}\right) \right\|_{X}\leq 1,$ we have $\displaystyle\Phi\left(\frac{1}{k}\right) \|\chi_A\|_{X}\leq 1,$ that is, $\displaystyle\Phi\left(\frac{1}{k}\right) \leq \frac{1}{\|\chi_A\|_{X}}$ or, equivalently, $\displaystyle\frac{1}{k} \leq \Phi^{-1}\left(\frac{1}{\|\chi_A\|_{X}}\right),$ which finally leads to $\alpha\leq k$ and so $\alpha\leq \|\chi_A\|_{X^{\Phi}}.$

(ii) Since $|f|\leq \|f\|_{L^{\infty}(\mu)}\chi_{\Omega},$ for any $f\in L^{\infty}(\mu),$ we have  
$$
\|f\|_{X^{\Phi}} \leq \|f\|_{L^{\infty}(\mu)} \|\chi_{\Omega}\|_{X^{\Phi}}
$$ 
and the result follows applying (i) to $\chi_{\Omega}.$
\end{proof}

The following two results explore the close relationship between the quantities $\|f\|_{X^{\Phi}}$ and $\|\Phi(|f|)\|_X.$ This entails interesting consequences on boundedness in $X^{\Phi},$ allowing us to obtain a sufficient condition and a necessary condition for it.

\begin{lemma} 
\label{OS boundedness}
Let $\Phi$ be a Young function, $X$ be a quasi-normed function space over $\mu$ and $H\subset L^0(\mu).$
\begin{itemize}
\item[(i)] If $f\in \widetilde{X}^{\Phi},$ then $\|f\|_{X^{\Phi}} \leq \max\{1, \|\Phi(|f|)\|_X \}.$
\item[(ii)] If $\{ \Phi(|h|) : h\in H \}$ is bounded in $X,$ then $H$ is bounded in $X^{\Phi}.$
\end{itemize}
\end{lemma}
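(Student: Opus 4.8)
The plan is to prove (i) directly from the definition of the Luxemburg quasi-norm by splitting into two cases according to the size of $M := \|\Phi(|f|)\|_X$, and then to deduce (ii) as an immediate consequence of (i).

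For (i), I would first observe that since $f \in \widetilde{X}^{\Phi}$ we have $\Phi(|f|) \in X$, so $M = \|\Phi(|f|)\|_X$ is finite. If $M \leq 1$, then $k = 1$ is already admissible in the infimum defining $\|f\|_{X^{\Phi}}$, since $\Phi(|f|/1) = \Phi(|f|)$ satisfies $\|\Phi(|f|)\|_X = M \leq 1$; hence $\|f\|_{X^{\Phi}} \leq 1 \leq \max\{1, M\}$. If instead $M > 1$, I would take $k = M$ and use the Young function inequality $\Phi(\alpha t) \leq \alpha\Phi(t)$ with $\alpha = 1/M \leq 1$ to obtain $\Phi(|f|/M) \leq (1/M)\Phi(|f|)$ pointwise; applying the lattice property of $\|\cdot\|_X$ then gives $\|\Phi(|f|/M)\|_X \leq (1/M)\|\Phi(|f|)\|_X = 1$, so $k = M$ is admissible and $\|f\|_{X^{\Phi}} \leq M = \max\{1, M\}$.

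For (ii), I would use the hypothesis that $\{\Phi(|h|) : h \in H\}$ is bounded in $X$ to fix a constant $B$ with $\|\Phi(|h|)\|_X \leq B$ for every $h \in H$. In particular each such $h$ lies in $\widetilde{X}^{\Phi}$, so part (i) applies and yields $\|h\|_{X^{\Phi}} \leq \max\{1, \|\Phi(|h|)\|_X\} \leq \max\{1, B\}$, uniformly in $h \in H$. This bounds $H$ in $X^{\Phi}$.

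There is essentially no hard step here: the whole argument is a clean repackaging of the finiteness computation already carried out in the proof of Proposition \ref{LQN properties}. The only point requiring a little care is selecting the correct branch of the Young function scaling inequality, namely the subadditive branch $\Phi(\alpha t) \leq \alpha\Phi(t)$ valid for $0 \leq \alpha \leq 1$, so that scaling the argument by $1/M$ brings the $X$-norm down to exactly $1$ in the case $M > 1$.
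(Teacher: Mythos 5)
Your proof is correct and follows essentially the same route as the paper's: both handle the case $\|\Phi(|f|)\|_X \leq 1$ by taking $k=1$ in the Luxemburg infimum, and the case $\|\Phi(|f|)\|_X > 1$ by scaling with $k = \|\Phi(|f|)\|_X$ via the convexity inequality $\Phi(\alpha t) \leq \alpha\,\Phi(t)$ for $0\leq\alpha\leq 1$ together with the ideal/lattice property of $X$, then deduce (ii) immediately from (i). No discrepancies of substance.
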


\begin{proof}
(i) On the one hand, $\|\Phi(|f|)\|_X\leq 1$ directly implies that  
$$
\|f\|_{X^{\Phi}} \leq 1 = \max\{1, \|\Phi(|f|)\|_X \}.
$$
On the other hand, if $\|\Phi(|f|)\|_X\geq 1,$ then
$$
\displaystyle\Phi\left(\frac{|f|}{\|\Phi(|f|)\|_X}\right) \leq \displaystyle\frac{1}{\|\Phi(|f|)\|_X} \Phi(|f|)\in X
$$
and hence $\displaystyle\Phi\left(\frac{|f|}{\|\Phi(|f|)\|_X}\right)\in X$ with $\displaystyle\left\|\Phi\left(\frac{|f|}{\|\Phi(|f|)\|_X}\right)\right\|_{X} \leq 1.$ This also leads to $\|f\|_{X^{\Phi}} \leq  \|\Phi(|f|)\|_X = \max\{1, \|\Phi(|f|)\|_X \}.$

(ii) If $\|\Phi(|h|)\|_X \leq M<\infty,$ for all $h\in H,$ according to (i) we have that  $\|h\|_{X^{\Phi}} \leq \max\{1, \|\Phi(|h|)\|_X \}\leq \max\{1,M\}<\infty,$ for all $h\in H.$
\end{proof}

\begin{lemma} 
\label{LQN-Modular relations}
Let $\Phi$ be a Young function, $X$ be a quasi-normed function space over $\mu$ and $f\in X^{\Phi}.$
\begin{itemize}
\item[(i)] If $\|f\|_{X^{\Phi}}<1,$ then $f\in\widetilde{X}^{\Phi}$ with $\|\Phi(|f|)\|_X \leq \|f\|_{X^{\Phi}}.$
\item[(ii)] If $\|f\|_{X^{\Phi}}>1$ and $f\in\widetilde{X}^{\Phi},$ then $\|\Phi(|f|)\|_X \geq \|f\|_{X^{\Phi}}.$
\item[(iii)] If $H\subseteq X^{\Phi}$ is bounded, then there exists a Young function $\Psi$ such that the set $\{ \Psi(|h|) : h\in H \}$ is bounded in $X.$
\end{itemize}
\end{lemma}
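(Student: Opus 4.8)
The plan is to view (i) and (ii) as the two halves of a single quantitative comparison between the Luxemburg quasi-norm and the modular quantity $\|\Phi(|f|)\|_X$, driven by the homogeneity estimates $\Phi(\alpha t)\le\alpha\Phi(t)$ for $0\le\alpha\le1$ and $\Phi(\alpha t)\ge\alpha\Phi(t)$ for $\alpha\ge1$, and then to deduce (iii) from (i) by a rescaling trick. For (i), set $\rho:=\|f\|_{X^{\Phi}}<1$. Since the defining set of the infimum is upward closed (because $|f|/k$ decreases as $k$ grows) and $\rho<1$, every $k$ with $\rho<k<1$ satisfies $\|\Phi(|f|/k)\|_X\le1$; as $0<k<1$, the first estimate gives
$$
\Phi(|f|)=\Phi\!\left(k\cdot\frac{|f|}{k}\right)\le k\,\Phi\!\left(\frac{|f|}{k}\right)\in X,
$$
so the ideal property forces $\Phi(|f|)\in X$, i.e. $f\in\widetilde{X}^{\Phi}$, together with $\|\Phi(|f|)\|_X\le k\,\|\Phi(|f|/k)\|_X\le k$. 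Letting $k\downarrow\rho$ then yields $\|\Phi(|f|)\|_X\le\|f\|_{X^{\Phi}}$.

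Part (ii) should follow at once from Lemma \ref{OS boundedness}(i): under the hypothesis $f\in\widetilde{X}^{\Phi}$ that lemma gives $\|f\|_{X^{\Phi}}\le\max\{1,\|\Phi(|f|)\|_X\}$, and since $\|f\|_{X^{\Phi}}>1$ the maximum cannot equal $1$, hence it equals $\|\Phi(|f|)\|_X$ and the claimed inequality drops out. Should a self-contained argument be preferred, one runs the mirror image of (i): for $1<k<\|f\|_{X^{\Phi}}$ the value $k$ does not belong to the defining set, so $\|\Phi(|f|/k)\|_X>1$; since $1/k<1$ gives $\Phi(|f|/k)\le(1/k)\Phi(|f|)$, this forces $\|\Phi(|f|)\|_X>k$, and letting $k\uparrow\|f\|_{X^{\Phi}}$ completes it.

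For (iii), write $M:=\sup_{h\in H}\|h\|_{X^{\Phi}}<\infty$, fix any $\lambda>M$, and put $\Psi(t):=\Phi(t/\lambda)$; being the composition of the convex, continuous, strictly increasing $\Phi$ with the affine map $t\mapsto t/\lambda$, and satisfying $\Psi(0)=0$ and $\Psi(t)\to\infty$ as $t\to\infty$, the function $\Psi$ is again a Young function. By homogeneity of the Luxemburg quasi-norm, $\|h/\lambda\|_{X^{\Phi}}=\|h\|_{X^{\Phi}}/\lambda\le M/\lambda<1$ for every $h\in H$, so part (i) applied to $h/\lambda$ gives $\|\Psi(|h|)\|_X=\|\Phi(|h|/\lambda)\|_X\le M/\lambda$, a bound independent of $h$; thus $\{\Psi(|h|):h\in H\}$ is bounded in $X$. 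I do not expect a serious obstacle here: the only delicate points are that the infimum in the Luxemburg quasi-norm need not be attained---so one must argue with $k$ tending to the extremal value rather than equal to it---and the recognition, which is the entire content of (iii), that dividing by a factor larger than the uniform bound $M$ is exactly what pushes each $\|h/\lambda\|_{X^{\Phi}}$ below $1$ and thereby unlocks part (i).
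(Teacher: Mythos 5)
Your proof is correct, and on parts (i) and (iii) it is essentially the paper's own argument: the same estimate $\Phi(|f|)\le k\,\Phi(|f|/k)$ for $0<k<1$ followed by passing to the infimum, and the same rescaling $\Psi(t):=\Phi(t/\lambda)$ with $\lambda$ exceeding the uniform bound so that (i) applies to each $h/\lambda$. The one genuine divergence is in (ii): the paper argues directly, taking $0<\varepsilon<\|f\|_{X^{\Phi}}-1$, noting that $\left\|\Phi\bigl(|f|/(\|f\|_{X^{\Phi}}-\varepsilon)\bigr)\right\|_X>1$ since $\|f\|_{X^{\Phi}}-\varepsilon$ lies below the infimum, and then using $\Phi(\alpha t)\ge\alpha\Phi(t)$ for $\alpha=\|f\|_{X^{\Phi}}-\varepsilon\ge1$ to get $\|\Phi(|f|)\|_X\ge\|f\|_{X^{\Phi}}-\varepsilon$ before letting $\varepsilon\to0$ --- which is precisely your fallback ``mirror image'' argument. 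Your primary route instead derives (ii) formally from Lemma \ref{OS boundedness}(i): since $f\in\widetilde{X}^{\Phi}$ gives $\|f\|_{X^{\Phi}}\le\max\{1,\|\Phi(|f|)\|_X\}$ and $\|f\|_{X^{\Phi}}>1$ rules out the maximum being $1$, the inequality drops out with no new computation. That is a legitimate and slightly slicker observation (and non-circular, as Lemma \ref{OS boundedness} is proved independently, directly from the definition); what the paper's direct proof buys in exchange is self-containedness and the quantitative mechanism laid bare. One small point in your fallback argument deserves a sentence: before asserting $\|\Phi(|f|/k)\|_X>1$ from the failure of $k$ to lie in the defining set, you must know $\Phi(|f|/k)\in X$ at all; this does follow, as you implicitly use, from $f\in\widetilde{X}^{\Phi}$, $k>1$, the bound $\Phi(|f|/k)\le(1/k)\Phi(|f|)$ and the ideal property of $X$, but it is exactly where the hypothesis $f\in\widetilde{X}^{\Phi}$ enters and should be made explicit.
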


\begin{proof}
(i) Given $0<k<1$ such that $\displaystyle\frac{|f|}{k}\in\widetilde{X}^{\Phi}$ with $\displaystyle\left\| \Phi\left(\frac{|f|}{k}\right) \right\|_{X}\leq 1,$ we have
$\displaystyle
\Phi(|f|) = \Phi\left(k\frac{|f|}{k}\right) \leq k \ \Phi\left(\frac{|f|}{k}\right)\in X.
$
Therefore, $\Phi(|f|)\in X$ with $\|\Phi(|f|)\|_X \leq k \displaystyle\left\| \Phi\left(\frac{|f|}{k}\right) \right\|_{X}\leq k$ and keeping in mind that $\|f\|_{X^{\Phi}}<1,$ we obtain
$$
\|\Phi(|f|)\|_X  \leq \inf \left\{ 0<k<1 : \frac{|f|}{k}\in\widetilde{X}^{\Phi} \mbox{ with } \left\| \Phi\left(\frac{|f|}{k}\right) \right\|_{X}\leq 1 \right\} = \|f\|_{X^{\Phi}}.
$$

(ii) Let $0<\varepsilon<\|f\|_{X^{\Phi}}-1$ and observe that $\displaystyle\left\| \Phi\left(\frac{|f|}{\|f\|_{X^{\Phi}}-\varepsilon}\right) \right\|_{X} > 1.$ Thus,
\begin{eqnarray*}
\|\Phi(|f|)\|_X & = & \left\|\Phi\left((\|f\|_{X^{\Phi}}-\varepsilon)\frac{|f|}{\|f\|_{X^{\Phi}}-\varepsilon}\right)\right\|_X \\
& \geq & (\|f\|_{X^{\Phi}}-\varepsilon) \left\| \Phi\left(\frac{|f|}{\|f\|_{X^{\Phi}}-\varepsilon}\right) \right\|_{X} \geq \|f\|_{X^{\Phi}}-\varepsilon,
\end{eqnarray*}
and letting $\varepsilon\to 0,$ it follows that $\|\Phi(|f|)\|_X \geq \|f\|_{X^{\Phi}}.$

\noindent
(iii) Take $M>0$ such that $\|h\|_{X^{\Phi}}<M,$ for all $h\in H.$ Since $\displaystyle\left\|\frac{h}{M}\right\|_{X^{\Phi}}< 1,$ for all $h\in H,$ statement (i) guarantees that $\displaystyle\Phi\left(\frac{|h|}{M}\right)\in X$ with $\displaystyle\left\| \Phi\left(\frac{|h|}{M}\right) \right\|_{X}\leq \left\|\frac{h}{M}\right\|_{X^{\Phi}}<1,$ for all $h\in H.$ Defining $\Psi(t):=\Phi\displaystyle\left(\frac{t}{M}\right),$ for all $t\geq 0,$ we produce a Young function such that $\{ \Psi(|h|) : h\in H \}$ is bounded in $X.$
\end{proof}

We are now in a position to establish the remarkable fact that Orlicz spaces $X^{\Phi}$ are always complete for any q-B.f.s. $X.$ It is worth pointing out that standard proofs in the Banach setting require the $\sigma$-Fatou property of $X$ to obtain the $\sigma$-Fatou property of $X^{\Phi}$ (see the next Theorem \ref{OS Fatou}) and as a byproduct, the completeness of this last space. However, as we have said before, there are many complete spaces without the $\sigma$-Fatou property, to which it is not possible to apply the Theorem \ref{OS Fatou}. Herein lies the importance of the result that we will show next about completeness of $X^\Phi.$

\begin{theorem} 
\label{OS completeness}
Let $\Phi$ a Young function and $X$ be a q-B.f.s. over $\mu.$ Then, $X^{\Phi}$ is complete (and hence it is a q-B.f.s. over $\mu$).
\end{theorem}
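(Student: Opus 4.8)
The plan is to deduce completeness of $X^{\Phi}$ from that of $X$ by combining the two completeness criteria of Section~3 with the quasi-subadditivity estimate \eqref{quasi-subaditivity}. Since $X^{\Phi}$ is a quasi-normed lattice with the same quasi-triangle constant $K$ as $X$ (Proposition~\ref{LQN properties}), Theorem~\ref{Amemiya completeness} reduces the task to showing that every positive increasing Cauchy sequence $(g_n)_n$ in $X^{\Phi}$ admits a supremum in $X^{\Phi}$. My first step would be to replace $(g_n)_n$ by a rapidly converging subsequence: using that the sequence is Cauchy, extract $(g_{n_k})_k$ with $\|g_{n_{k+1}}-g_{n_k}\|_{X^{\Phi}}\le (4K)^{-k}$. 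Because the sequence is increasing, this subsequence has the same pointwise supremum $g=\sup_k g_{n_k}$, so it suffices to prove $g\in X^{\Phi}$. Writing $h_k:=g_{n_{k+1}}-g_{n_k}\ge 0$, we have $g=g_{n_1}+\sum_{k\ge 1}h_k$ pointwise, and since $X^{\Phi}$ is a linear space (Proposition~\ref{OS properties}) it is enough to show $\sum_{k\ge1}h_k\in X^{\Phi}$.

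The heart of the argument is to estimate $\Phi$ of the partial sums. Fixing $\alpha:=K\ge1$ and applying \eqref{quasi-subaditivity} pointwise gives, for every $N$,
\[
\Phi\Bigl(\sum_{k=1}^{N}h_k\Bigr)\le \sum_{k=1}^{N}\frac{1}{(2K)^k}\,\Phi\bigl((2K)^k h_k\bigr)=:\sum_{k=1}^{N}u_k .
\]
The decay of the subsequence is chosen precisely so that $\|(2K)^k h_k\|_{X^{\Phi}}=(2K)^k\|h_k\|_{X^{\Phi}}\le 2^{-k}<1$; Lemma~\ref{LQN-Modular relations}(i) then yields $\Phi\bigl((2K)^k h_k\bigr)\in X$ with $\bigl\|\Phi((2K)^k h_k)\bigr\|_X\le \|(2K)^k h_k\|_{X^{\Phi}}\le 2^{-k}$, so that $u_k\in X$ and $\|u_k\|_X\le (4K)^{-k}$.

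Next I would invoke the completeness of $X$ through Theorem~\ref{Maligranda completeness}: the weights were engineered so that $\sum_{k\ge1}K^k\|u_k\|_X\le\sum_{k\ge1}4^{-k}<\infty$, whence the series $U:=\sum_{k\ge1}u_k$ converges in $X$. Since any quasi-normed function space embeds continuously into $L^0(\mu)$, convergence in $X$ forces a subsequence of the (increasing) partial sums to converge $\mu$-a.e., so $\sum_{k=1}^{N}u_k\uparrow U$ pointwise and in particular $\Phi(\sum_{k=1}^N h_k)\le U$ $\mu$-a.e.\ for all $N$. Letting $N\to\infty$ and using the continuity and monotonicity of $\Phi$ yields $\Phi\bigl(\sum_{k\ge1}h_k\bigr)\le U\in X$; the ideal property of $X$ then gives $\Phi(\sum_{k\ge1}h_k)\in X$, that is, $\sum_{k\ge1}h_k\in\widetilde{X}^{\Phi}\subseteq X^{\Phi}$ (and, since $\Phi(t)\to\infty$, this forces $\sum_k h_k<\infty$ a.e.). Therefore $g=g_{n_1}+\sum_{k\ge1}h_k\in X^{\Phi}$ is the required supremum, and completeness follows from Theorem~\ref{Amemiya completeness}.

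The step I expect to be most delicate is the bookkeeping of constants: one must coordinate the subsequence decay rate $(4K)^{-k}$ with the parameter $\alpha=K$ in \eqref{quasi-subaditivity} so that, after multiplying by the weights $K^k$ demanded by Theorem~\ref{Maligranda completeness}, the resulting series $\sum_k K^k(4K)^{-k}=\sum_k 4^{-k}$ still converges while simultaneously keeping $\|(2K)^k h_k\|_{X^{\Phi}}<1$ so that Lemma~\ref{LQN-Modular relations}(i) applies; a careless choice makes the series diverge. The remaining subtlety is the passage from norm convergence of $\sum_k u_k$ in $X$ to the pointwise inequality $\Phi(\sum_{k\le N}h_k)\le U$, which relies on the continuous embedding $X\hookrightarrow L^0(\mu)$ together with the monotonicity of the partial sums.
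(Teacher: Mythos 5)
Your proposal is correct and follows essentially the same route as the paper's proof: reduce to increasing Cauchy sequences via Theorem~\ref{Amemiya completeness}, extract a rapidly convergent subsequence, combine the quasi-subadditivity estimate \eqref{quasi-subaditivity} (with $\alpha=K$) and Lemma~\ref{LQN-Modular relations}(i) to control $\Phi$ of the partial sums, invoke Theorem~\ref{Maligranda completeness} in $X$, and dominate the supremum using the embedding $X\hookrightarrow L^0(\mu)$ and the ideal property. The only differences are cosmetic bookkeeping (you use the decay $(4K)^{-k}$ and keep the weights $(2K)^{-k}$ in the dominating series, while the paper uses $(2K)^{-2n}$ and drops the weights before summing), and your handling of the a.e.\ convergence of the partial sums is, if anything, slightly more careful than the paper's.
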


\begin{proof}
Let $(h_n)_n$ be a positive increasing Cauchy sequence in $X^{\Phi}$ and take $K\geq 1$ as in (Q3). Then, we can choose a subsequence of $(h_n)_n,$ that we denote by $(f_n)_n,$ such that $\|f_{n+1}-f_{n}\|_{X^{\Phi}} < \displaystyle\frac{1}{2^{2n} K^{2n}},$ for all $n\in\mathbb{N}.$ Thus, 
$$
\left\|2^{n} K^{n} (f_{n+1}-f_{n}) \right\|_{X^{\Phi}} < \displaystyle\frac{1}{2^{n} K^{n}}< 1
$$ 
for all $n\in\mathbb{N},$ and by Lemma \ref{LQN-Modular relations} it follows that
$$
\left\|\Phi\left(2^{n} K^{n} \left(f_{n+1}-f_{n}\right)\right)\right\|_{X} \leq \left\|2^{n} K^{n} \left(f_{n+1}-f_{n}\right) \right\|_{X^{\Phi}} < \frac{1}{2^{n} K^{n}}, \quad n\in\mathbb{N},
$$
which proves that
$\displaystyle
\sum_{n=1}^{\infty} K^n \left\|\Phi\left(2^{n} K^{n} \left(f_{n+1}-f_{n}\right) \right)\right\|_{X} \leq \sum_{n=1}^{\infty} \frac{1}{2^{n}} < \infty.
$
The completeness of $X$ ensures that $f:=\displaystyle\sum_{n=1}^{\infty} \Phi\left(2^{n} K^{n}  \left(f_{n+1}-f_{n}\right) \right) \in X,$ by Theorem \ref{Maligranda completeness}. Note that $f\in L^0(\mu)$ and the convergence of that series is also $\mu$-a.e, since $X$ is continuously included in $L^0(\mu).$ Given $N\in\mathbb{N},$ let $g_N:=\displaystyle\sum_{n=1}^{N} (f_{n+1}-f_{n})$ and denote by $g:=\displaystyle\sup_{N} g_N $ pointwise $\mu$-a.e. Applying (\ref{quasi-subaditivity}) with $\alpha:=K,$ it follows that for all $N\in\mathbb{N},$
\begin{eqnarray*}
\Phi(g_N)  & = & \Phi\left(\sum_{n=1}^{N} (f_{n+1}-f_{n}) \right) \leq \sum_{n=1}^{N} \frac{1}{2^n K^n} \Phi\left(2^n K^n \left(f_{n+1} -f_{n}\right) \right)
\\
& \leq & \sum_{n=1}^{N} \Phi(2^{n} K^{n} (f_{n+1}-f_{n}) ) \leq f
\end{eqnarray*}
Therefore, $0\leq g_N\leq \Phi^{-1}(f)\in L^0(\mu)$ for all $N\in\mathbb{N}$ and so $g\in L^0(\mu)$ with $0\leq g\leq \Phi^{-1}(f) \in X^{\Phi},$ which guarantees that $g\in X^{\Phi}.$ But
$$
f_{N+1} =\sum_{n=1}^{N} (f_{n+1}-f_{n}) + f_{1} = g_{N} + f_1
$$
for all $N\in\mathbb{N}$ and so there also exists $\displaystyle\sup_{n} f_n = g + f_1 \in X^{\Phi}.$ Since $(f_n)_n$ is a subsequence of the original increasing sequence $(h_n)_n,$ the supremum of the whole sequence must exists and be the same as the supremum of the subsequence. By applying Amemiya's Theorem \ref{Amemiya completeness} we conclude that $X^{\Phi}$ is complete.
\end{proof}

If the q-B.f.s. $X$ has the $\sigma$-Fatou property, then we can improve a little more our knowledge about $X^{\Phi}$ as the following proposition makes evident.

\begin{theorem} 
\label{OS Fatou}
Let $\Phi$ be a Young function and $X$ be a q-B.f.s. over $\mu$ with the $\sigma$-Fatou property.
\begin{itemize}
\item[(i)] If $0\neq f\in X^{\Phi}$ then $\displaystyle\frac{|f|}{\|f\|_{X^{\Phi}}}\in\widetilde{X}^{\Phi}$ with $\displaystyle\left\| \Phi\left(\frac{|f|}{\|f\|_{X^{\Phi}}}\right) \right\|_{X}\leq 1.$
\item[(ii)] If $f\in X^{\Phi}$ with $\|f\|_{X^{\Phi}}\leq 1$ then $f\in\widetilde{X}^{\Phi}$ with $\|\Phi(|f|)\|_X \leq \|f\|_{X^{\Phi}}.$
\item[(iii)] $X^{\Phi}$ also has the $\sigma$-Fatou property.
\end{itemize}
\end{theorem}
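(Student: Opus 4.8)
The plan is to establish the three parts in order, deriving (ii) from (i) and then using the same circle of ideas for (iii). In all three parts the $\sigma$-Fatou property of $X$ enters as the one tool that lets me push a pointwise increasing limit through the continuous increasing function $\Phi$ while retaining exact control of the $X$-norm; the conceptual content of (i) is that $\sigma$-Fatou upgrades the defining infimum of the Luxemburg quasi-norm into an attained minimum.

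For (i), write $\lambda := \|f\|_{X^{\Phi}}$, which is strictly positive because $\|\cdot\|_{X^\Phi}$ is a quasi-norm (Proposition \ref{LQN properties}) and $f\neq 0$. I would first observe that the defining set $S := \{k>0 : \Phi(|f|/k)\in X \text{ and } \|\Phi(|f|/k)\|_{X}\le 1\}$ is upward closed: if $k\in S$ and $k'>k$ then $|f|/k'\le |f|/k$, so by monotonicity of $\Phi$ and the ideal property of $X$ one has $\Phi(|f|/k')\le\Phi(|f|/k)\in X$ with $\|\Phi(|f|/k')\|_X\le 1$. Hence $(\lambda,\infty)\subseteq S$. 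Choosing any $k_n\downarrow\lambda$ with $k_n>\lambda$, the functions $\Phi(|f|/k_n)$ form a positive increasing sequence in $X$ with $\sup_n\|\Phi(|f|/k_n)\|_X\le 1$, and since $|f|/k_n\uparrow |f|/\lambda$ pointwise and $\Phi$ is continuous and increasing, they converge pointwise $\mu$-a.e.\ to $\Phi(|f|/\lambda)$. The $\sigma$-Fatou property then gives $\Phi(|f|/\lambda)\in X$ with $\|\Phi(|f|/\lambda)\|_X=\sup_n\|\Phi(|f|/k_n)\|_X\le 1$, which is exactly the claim.

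For (ii) the case $f=0$ is immediate, and for $f\neq 0$ with $\lambda:=\|f\|_{X^{\Phi}}\le 1$ I would simply combine (i) with the elementary estimate $\Phi(\alpha t)\le\alpha\Phi(t)$ for $0\le\alpha\le 1$ recalled at the start of Section 4. Indeed, (i) gives $\Phi(|f|/\lambda)\in X$ with $\|\Phi(|f|/\lambda)\|_X\le 1$, whence, taking $\alpha=\lambda$, $\Phi(|f|)=\Phi(\lambda\,|f|/\lambda)\le\lambda\,\Phi(|f|/\lambda)\in X$, so $f\in\widetilde{X}^{\Phi}$ and $\|\Phi(|f|)\|_X\le\lambda\|\Phi(|f|/\lambda)\|_X\le\lambda=\|f\|_{X^{\Phi}}$.

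For (iii), let $(f_n)_n$ be positive increasing with $L:=\sup_n\|f_n\|_{X^{\Phi}}<\infty$ and $f_n\to f$ pointwise $\mu$-a.e.; if $L=0$ then every $f_n=0$ and $f=0$, so assume $L>0$. The inequality $L\le\|f\|_{X^{\Phi}}$ will follow once $f\in X^{\Phi}$ is known, directly from $f_n\le f$ and the lattice property of the Luxemburg quasi-norm. For membership together with the reverse bound, I would fix an arbitrary $c>L$: then $\|f_n/c\|_{X^{\Phi}}=\|f_n\|_{X^{\Phi}}/c<1$, so Lemma \ref{LQN-Modular relations}(i) yields $\Phi(f_n/c)\in X$ with $\|\Phi(f_n/c)\|_X\le\|f_n/c\|_{X^{\Phi}}\le L/c$. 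Since $f_n/c\uparrow f/c$ and $\Phi$ is continuous and increasing, $(\Phi(f_n/c))_n$ is positive increasing in $X$ with norms bounded by $L/c<\infty$ and converging pointwise to $\Phi(f/c)$; the $\sigma$-Fatou property then gives $\Phi(f/c)\in X$ with $\|\Phi(f/c)\|_X=\sup_n\|\Phi(f_n/c)\|_X\le L/c\le 1$. Thus $f/c\in\widetilde{X}^{\Phi}$, so $f\in X^{\Phi}$ with $\|f\|_{X^{\Phi}}\le c$, and letting $c\downarrow L$ gives $\|f\|_{X^{\Phi}}\le L$. Together with the reverse inequality this yields $\|f\|_{X^{\Phi}}=L=\sup_n\|f_n\|_{X^{\Phi}}$.

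I expect the main obstacle to sit in part (iii): membership $f\in X^{\Phi}$ is comparatively soft, but obtaining the \emph{exact} norm equality rather than a mere bound forces the normalization device of working at scale $c>L$ (so that Lemma \ref{LQN-Modular relations}(i) applies in the strict regime $\|\cdot\|_{X^{\Phi}}<1$) and then optimizing by letting $c\downarrow L$. The only other point requiring care is the upward-closedness argument in (i), which is what legitimizes selecting the approximating sequence $k_n\downarrow\lambda$ inside $S$ so that $\sigma$-Fatou can be invoked.
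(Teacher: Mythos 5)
Your proof is correct and follows essentially the same route as the paper's: in (i) you choose admissible $k_n\downarrow\|f\|_{X^{\Phi}}$ (your explicit upward-closedness remark just justifies what the paper takes for granted) and invoke the $\sigma$-Fatou property of $X$ on the increasing modulars; in (ii) you combine (i) with $\Phi(\alpha t)\le\alpha\,\Phi(t)$ for $0\le\alpha\le 1$; and in (iii) you push the modulars of the normalized sequence through the $\sigma$-Fatou property of $X$. The only, immaterial, divergence is in (iii), where you normalize at an arbitrary $c>L$ so as to use Lemma~\ref{LQN-Modular relations}(i) in the strict regime and then let $c\downarrow L$, whereas the paper normalizes directly by $M=\sup_n\|f_n\|_{X^{\Phi}}$ and applies part (ii); both arguments yield the same conclusion $\|f\|_{X^{\Phi}}=\sup_n\|f_n\|_{X^{\Phi}}$.
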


\begin{proof}
(i) Take a sequence $(k_n)_n$ such that $k_n \downarrow \|f\|_{X^{\Phi}}$ and $\displaystyle\left\| \Phi\left(\frac{|f|}{k_n}\right) \right\|_{X}\leq 1,$ for all $n\in\mathbb{N}.$ Then, $\displaystyle\frac{|f|}{k_n}\uparrow \frac{|f|}{\|f\|_{X^{\Phi}}}$ and so $\displaystyle\Phi\left(\frac{|f|}{k_n}\right)\uparrow \Phi\left(\frac{|f|}{\|f\|_{X^{\Phi}}}\right),$ since $\Phi$ is continuous and increasing. The $\sigma$-Fatou property of $X$ guarantees that $\displaystyle\Phi\left(\frac{|f|}{\|f\|_{X^{\Phi}}}\right)\in X$ and $\displaystyle\left\| \Phi\left(\frac{|f|}{\|f\|_{X^{\Phi}}}\right) \right\|_{X} = \sup_{n} \left\| \Phi\left(\frac{|f|}{k_n}\right) \right\|_X \leq 1.$

(ii) According to (i) and the inequality
$$
\Phi(|f|) = \Phi\left(\|f\|_{X^{\Phi}}\frac{|f|}{\|f\|_{X^{\Phi}}}\right) \leq \|f\|_{X^{\Phi}} \ \Phi\left(\frac{|f|}{\|f\|_{X^{\Phi}}}\right)
$$
we deduce that $\Phi(|f|)\in X$ and
$$
\|\Phi(|f|)\|_X \leq \|f\|_{X^{\Phi}} \left\| \Phi\left(\frac{|f|}{\|f\|_{X^{\Phi}}}\right) \right\|_{X}\leq \|f\|_{X^{\Phi}}.
$$

(iii) Let $(f_n)_n$ in $X^{\Phi}$ with $0\leq f_n\uparrow f$ $\mu$-a.e. and $M:=\displaystyle\sup_n \|f_n\|_{X^{\Phi}}<\infty.$ Then, $\displaystyle\Phi\left(\frac{f_n}{M}\right)\uparrow \Phi\left(\frac{f}{M}\right)$ $\mu$-a.e. and $\displaystyle\left\|\frac{f_n}{M}\right\|_{X^{\Phi}}\leq 1$ for all $n\in\mathbb{N}.$ Applying (ii), we deduce that $\displaystyle\Phi\left(\frac{f_n}{M}\right)\in X$ with $\displaystyle\left\|\Phi\left(\frac{f_n}{M}\right)\right\|_{X}\leq 1$ for all $n\in\mathbb{N}$ and using the $\sigma$-Fatou property of $X,$ it follows that $\displaystyle\Phi\left(\frac{f}{M}\right)\in X$ with 
$$
\left\|\Phi\left(\frac{f}{M}\right)\right\|_{X} = \sup_n \left\| \Phi\left(\frac{f_n}{M}\right) \right\|_{X}\leq 1.
$$
This implies that $f\in X^{\Phi}$ with $\|f\|_{X^{\Phi}}\leq M$ and we also have $M\leq \|f\|_{X^{\Phi}},$ since $f_n\leq f\in X^{\Phi}.$ Thus, $\|f\|_{X^{\Phi}}=M,$ which proves that $X^{\Phi}$ has the $\sigma$-Fatou property.
\end{proof}

The relation between the Orlicz class and its corresponding Orlicz space is greatly simplified when the Young function has the $\Delta_2$-property. In addition, this has far-reaching consequences on convergence in $X^{\Phi}$ as we state in the next result.

\begin{theorem} 
\label{OS Delta2}
Let $X$ be a quasi-normed function space over $\mu$ and $\Phi\in\Delta_2.$
\begin{itemize}
\item[(i)] The Orlicz space and the Orlicz class coincide: $X^{\Phi} = \widetilde{X}^{\Phi}.$
\item[(ii)] $\|f_n\|_{X^{\Phi}}\to 0$ if and only if $\|\Phi(|f_n|)\|_{X}\to 0,$ for all $(f_n)\subseteq X^{\Phi}.$
\item[(iii)] If $X$ is $\sigma$-order continuous, then $X^{\Phi}$ is also $\sigma$-order continuous.
\end{itemize}
\end{theorem}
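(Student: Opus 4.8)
The plan is to prove the three parts in order, exploiting the $\Delta_2$-property in the equivalent form recalled before the statement: for every $c>1$ there is $C>1$ with $\Phi(ct)\le C\,\Phi(t)$ for all $t\ge 0$. For part (i) the inclusion $\widetilde{X}^{\Phi}\subseteq X^{\Phi}$ is already known (Proposition \ref{OS properties}), so only $X^{\Phi}\subseteq\widetilde{X}^{\Phi}$ must be shown. Given $f\in X^{\Phi}$, I would fix $c>0$ with $\Phi(|f|/c)\in X$. If $c\le 1$ then $|f|\le|f|/c$ gives $\Phi(|f|)\le\Phi(|f|/c)\in X$ by the ideal property; if $c>1$, applying $\Delta_2$ with this particular $c$ produces $C>1$ with $\Phi(|f|)=\Phi\bigl(c\cdot(|f|/c)\bigr)\le C\,\Phi(|f|/c)\in X$. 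In both cases $\Phi(|f|)\in X$, that is $f\in\widetilde{X}^{\Phi}$.

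For part (ii) the forward implication needs no $\Delta_2$: if $\|f_n\|_{X^{\Phi}}\to 0$ then $\|f_n\|_{X^{\Phi}}<1$ for large $n$, and Lemma \ref{LQN-Modular relations}(i) gives $\|\Phi(|f_n|)\|_X\le\|f_n\|_{X^{\Phi}}\to 0$. For the converse, suppose $\|\Phi(|f_n|)\|_X\to 0$ and fix $0<\varepsilon<1$. Applying $\Delta_2$ with $c=1/\varepsilon>1$ yields $C>1$ with $\Phi(|f_n|/\varepsilon)\le C\,\Phi(|f_n|)$, so by homogeneity $\|\Phi(|f_n|/\varepsilon)\|_X\le C\,\|\Phi(|f_n|)\|_X\to 0$. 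Hence this quantity is $\le 1$ for $n$ large, which forces $\|f_n\|_{X^{\Phi}}\le\varepsilon$ by the definition of the Luxemburg quasi-norm; letting $\varepsilon\to 0$ gives $\|f_n\|_{X^{\Phi}}\to 0$. (The case $\varepsilon\ge 1$ is handled directly by the Young inequality $\Phi(\alpha t)\le\alpha\Phi(t)$ for $\alpha\le 1$ and requires no $\Delta_2$.)

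Part (iii) is where I expect the real difficulty. Let $0\le f_n\uparrow f$ with $f\in X^{\Phi}$; by part (i) we have $\Phi(f)\in X$, and by part (ii) the desired conclusion $\|f-f_n\|_{X^{\Phi}}\to 0$ reduces to showing $\|\Phi(f-f_n)\|_X\to 0$. The obstacle is that $\sigma$-order continuity of $X$ is formulated for \emph{increasing} sequences, whereas $\Phi(f-f_n)$ \emph{decreases} to $0$. The key device is to pass to the complementary sequence $g_n:=\Phi(f)-\Phi(f-f_n)$: since $f-f_n\downarrow 0$ and $\Phi$ is increasing with $\Phi(0)=0$, one checks that $0\le g_n\uparrow\Phi(f)$ pointwise, and each $g_n$ lies in $X$ because $0\le\Phi(f-f_n)\le\Phi(f)\in X$. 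Applying the $\sigma$-order continuity of $X$ to $(g_n)_n\uparrow\Phi(f)\in X$ gives $\|\Phi(f)-g_n\|_X=\|\Phi(f-f_n)\|_X\to 0$, and part (ii) then delivers $\|f-f_n\|_{X^{\Phi}}\to 0$, completing the proof.
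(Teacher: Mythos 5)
Your proof is correct and follows essentially the same route as the paper's: the same case split on $c\le 1$ versus $c>1$ in (i), the same $\Delta_2$ estimate $\Phi(|f_n|/\varepsilon)\le C\,\Phi(|f_n|)$ driving (ii) (you phrase it directly where the paper argues by contraposition on a subsequence), and the same reduction of (iii) to (ii) via $\Phi(f-f_n)\downarrow 0$. Your only addition is to make explicit, via the complementary sequence $g_n=\Phi(f)-\Phi(f-f_n)\uparrow\Phi(f)$, the standard conversion from decreasing-to-zero sequences to the increasing form of $\sigma$-order continuity, a step the paper leaves implicit.
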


\begin{proof}
(i) Given $f\in X^{\Phi},$ there exists $c>0$ such that $\displaystyle\Phi\left(\frac{|f|}{c}\right)\in X.$ If $c\leq 1,$ then
$\displaystyle
\Phi(|f|) = \Phi\left(c \ \frac{|f|}{c}\right) \leq c \ \Phi\left(\frac{|f|}{c}\right) \in X,
$
and if $c>1,$ then there exist $C>1$ such that $\Phi(ct)\leq C\Phi(t)$ for all $t\geq 0$ by the $\Delta_2$-property of $\Phi.$ Therefore,
$\displaystyle
\Phi(|f|) = \Phi\left(c \ \frac{|f|}{c}\right) \leq C \ \Phi\left(\frac{|f|}{c}\right) \in X.
$
In any case, it follows that $\Phi(|f|)\in X,$ which means that $f\in \widetilde{X}^{\Phi}.$

\noindent
(ii) If $\|f_n\|_{X^{\Phi}}\to 0,$ then $\|\Phi(|f_n|)\|_{X}\to 0$ as a consequence of Lemma \ref{LQN-Modular relations} (i). Suppose now that
 $\|f_n\|_{X^{\Phi}}$ does not converges to $0.$ Then, there exists $\varepsilon>0$ and a subsequence $(f_{n_k})$ of $(f_n)$ such that $\|f_{n_k}\|_{X^{\Phi}}>\varepsilon$ for all $k\in\mathbb{N}.$ We can assume that $\varepsilon<1$ and that $(f_{n_k})$ is the whole $(f_n)$ without loss of generality. Since $\Phi\in\Delta_2$ and $\displaystyle\frac{1}{\varepsilon}>1,$ there exist $C>1$ such that $\displaystyle\Phi\left(\frac{|f_n|}{\varepsilon}\right) \leq C \Phi(|f_n|).$ By (i), we deduce that $\displaystyle\Phi\left(\frac{|f_n|}{\varepsilon}\right)\in X$ and hence $\displaystyle\left\| \Phi\left(\frac{|f_n|}{\varepsilon}\right) \right\|_{X} > 1.$ Thus, 
 $$
 \|\Phi(|f_n|)\|_{X} \geq \displaystyle\frac{1}{C} \displaystyle\left\| \Phi\left(\frac{|f_n|}{\varepsilon}\right) \right\|_{X} > \frac{1}{C} >0,
 $$
 which means that $\|\Phi(|f_n|)\|_{X}$ does not converges to $0.$

\noindent
(iii) Let $(f_n)_n$ and $f$ in $X^{\Phi}$ such that $0\leq f_n\uparrow f$ $\mu$-a.e. Then, $\Phi\left(f-f_n\right)\downarrow 0$ $\mu$-a.e. Since $X$ is $\sigma$-order continuous, it follows that $\|\Phi\left(f-f_n\right)\|_{X}\to 0$ and by (ii) this implies that $\|f-f_n\|_{X^{\Phi}}\to 0,$ which gives the $\sigma$-order continuity of $X^{\Phi}.$
\end{proof}

\section{Applications: Orlicz spaces associated to a vector measure}

First of all observe that classical Orlicz spaces $L^{\Phi}(\mu)$ with respect to a po\-sitive finite measure $\mu$ are obtained applying the construction $X^{\Phi}$ of section 4 to the B.f.s. $X=L^1(\mu),$ that is, $L^{\Phi}(\mu) = L^1(\mu)^{\Phi}$ equipped with the norm $\|\cdot\|_{L^{\Phi}(\mu)} := \|\cdot\|_{L^1(\mu)^{\Phi}}.$ Using these classical Orlicz spaces, the Orlicz spaces $L^{\Phi}_w(m)$ and $L^{\Phi}(m)$ with respect to a vector measure $m:\Sigma\to Y$ were introduced in \cite{Del} in the following way:
$$
L^{\Phi}_w(m) := \left\{ f\in L^0(m) : f\in L^{\Phi}(|\langle m, y^*\rangle|), \ \forall\, y^*\in Y^* \right\},
$$
equipped with the norm
$$
\|f\|_{L^{\Phi}_w(m)} := \sup \left\{ \|f\|_{L^{\Phi}(|\langle m, y^*\rangle|)} : y^*\in B_{Y^*}\right\},
$$
and $L^{\Phi}(m)$ is the closure of simple functions $\mathscr{S}(\Sigma)$ in $L^{\Phi}_w(m).$ The next result establishes that these Orlicz spaces $L^{\Phi}_w(m)$ and $L^{\Phi}(m)$ can be obtained as generalized Orlicz spaces $X^{\Phi}$ by taking $X$ to be $L^1_w(m)$ and $L^1(m),$ respectively.

\begin{proposition}
Let $\Phi$ be a Young function and $m:\Sigma\to Y$ a vector measure.
\begin{itemize}
\item[(i)] $L^{\Phi}_w(m) = L^1_w(m)^{\Phi}$ and $\|f\|_{L^{\Phi}_w(m)} = \|f\|_{L^1_w(m)^{\Phi}},$ for all $f\in L^{\Phi}_w(m).$
\item[(ii)] $L^{\Phi}(m) \subseteq L^1(m)^{\Phi}$ and if $\Phi\in\Delta_2,$ then $L^{\Phi}(m) = L^1(m)^{\Phi}.$
\end{itemize}
\end{proposition}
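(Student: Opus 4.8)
The plan is to handle the two parts separately: part~(i) is an interchange-of-extrema identity for the Luxemburg functional, while part~(ii) rests on $L^1(m)$ being a closed, $\sigma$-order continuous ideal of $L^1_w(m)$.

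For (i), I would write both functionals explicitly. Using the classical Luxemburg expression $\|f\|_{L^\Phi(|\langle m,y^*\rangle|)}=\inf\{k>0:\int_\Omega\Phi(|f|/k)\,d|\langle m,y^*\rangle|\le 1\}$, the two quantities to compare are
\[
\|f\|_{L^\Phi_w(m)}=\sup_{y^*\in B_{Y^*}}\inf\Big\{k>0:\int_\Omega\Phi(|f|/k)\,d|\langle m,y^*\rangle|\le 1\Big\}
\]
and
\[
\|f\|_{L^1_w(m)^\Phi}=\inf\Big\{k>0:\sup_{y^*\in B_{Y^*}}\int_\Omega\Phi(|f|/k)\,d|\langle m,y^*\rangle|\le 1\Big\},
\]
the second because $\|\Phi(|f|/k)\|_{L^1_w(m)}=\sup_{y^*\in B_{Y^*}}\int_\Omega\Phi(|f|/k)\,d|\langle m,y^*\rangle|$. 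The inequality $\|f\|_{L^\Phi_w(m)}\le\|f\|_{L^1_w(m)^\Phi}$ is immediate, since any $k$ admissible on the right keeps each individual integral $\le 1$. For the reverse, the main point is that whenever $k\ge\|f\|_{L^\Phi_w(m)}$ one has $k\ge\|f\|_{L^\Phi(|\langle m,y^*\rangle|)}$ for every $y^*$; the classical scalar fact that $\int_\Omega\Phi(|f|/k)\,d\nu\le 1$ for $k\ge\|f\|_{L^\Phi(\nu)}$ --- the scalar case of Theorem~\ref{OS Fatou}(i), since each $L^1(|\langle m,y^*\rangle|)$ has the $\sigma$-Fatou property --- then forces $\sup_{y^*}\int_\Omega\Phi(|f|/k)\,d|\langle m,y^*\rangle|\le 1$, so such $k$ is admissible on the right. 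Read as an identity in $[0,\infty]$, and recalling from \cite{Del} that $\|\cdot\|_{L^\Phi_w(m)}$ is finite on $L^\Phi_w(m)$, this yields both the equality of the sets and the isometry.

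For $L^\Phi(m)\subseteq L^1(m)^\Phi$ in (ii), I would first note $\mathscr{S}(\Sigma)\subseteq L^1(m)^\Phi$ (if $s$ is simple, so is $\Phi(|s|)$, hence $\Phi(|s|)\in L^1(m)$). Given $f\in L^\Phi(m)$, part~(i) supplies simple $s_n\to f$ in $L^1_w(m)^\Phi$, and it suffices to prove $g:=\Phi(|f|/2)\in L^1(m)$, for then $f\in\widetilde{L^1(m)}^\Phi\subseteq L^1(m)^\Phi$. Convexity gives, for each $n$,
\[
g=\Phi\Big(\tfrac{1}{2}|f-s_n|+\tfrac{1}{2}|s_n|\Big)\le\tfrac{1}{2}\Phi(|f-s_n|)+\tfrac{1}{2}\Phi(|s_n|)=:b_n+a_n,
\]
where $a_n:=\tfrac12\Phi(|s_n|)\in L^1(m)$, while $b_n:=\tfrac12\Phi(|f-s_n|)\in L^1_w(m)$ with $\|b_n\|_{L^1_w(m)}\le\tfrac12\|f-s_n\|_{L^1_w(m)^\Phi}\to 0$ by Lemma~\ref{LQN-Modular relations}(i) (once $\|f-s_n\|_{L^1_w(m)^\Phi}<1$). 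In particular $g\in L^1_w(m)$ by the ideal property, and the elementary identity $g=(g\wedge a_n)+(g-a_n)^+$ splits it into $g\wedge a_n\le a_n$, which lies in the ideal $L^1(m)$, and $(g-a_n)^+\le b_n$, whose $L^1_w(m)$-norm tends to $0$. Thus $g$ is approximable in $L^1_w(m)$ by members of $L^1(m)$; since $L^1(m)$ is closed in $L^1_w(m)$, we get $g\in L^1(m)$.

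For the reverse inclusion when $\Phi\in\Delta_2$, I would invoke Theorem~\ref{OS Delta2}: as $L^1(m)$ is $\sigma$-order continuous, so is $L^1(m)^\Phi$. Given $f\in L^1(m)^\Phi$, choose simple functions $s_n$ of the same sign as $f$ with $|s_n|\uparrow|f|$, so that $|f-s_n|=|f|-|s_n|\downarrow 0$; since $L^1(m)^\Phi$ is an ideal, $s_n\in L^1(m)^\Phi$, and $\sigma$-order continuity gives $\|f-s_n\|_{L^1(m)^\Phi}\to 0$. The inclusion $L^1(m)^\Phi\subseteq L^1_w(m)^\Phi$ is automatically continuous (both are q-B.f.s. over $\mu$), so $s_n\to f$ in $L^1_w(m)^\Phi=L^\Phi_w(m)$, placing $f$ in the closure of $\mathscr{S}(\Sigma)$, i.e. $f\in L^\Phi(m)$.

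The main obstacle is the forward inclusion in (ii): it amounts to upgrading the mere weak integrability of $\Phi(|f|/c)$ to genuine integrability, with no $\Delta_2$ available. The device that carries this through is the decomposition $g=(g\wedge a_n)+(g-a_n)^+$ tested against the closed ideal $L^1(m)$, combined with the modular smallness of $\Phi(|f-s_n|)$ from Lemma~\ref{LQN-Modular relations}(i). Part~(i), by comparison, is delicate only in the exchange of $\sup_{y^*}$ and $\inf_k$, which the scalar Luxemburg modular bound settles at once.
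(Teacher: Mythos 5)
Your proposal is correct. Part (i) is essentially the paper's own argument: both directions reduce to the scalar spaces $L^{1}(|\langle m,y^*\rangle|)$, which have the $\sigma$-Fatou property, and your modular bound at $k\geq\|f\|_{L^{\Phi}(|\langle m,y^*\rangle|)}$ is exactly the paper's application of Theorem \ref{OS Fatou} (used there in the form (ii), applied to $X=L^{1}(|\langle m,y^*\rangle|)$), followed by the supremum over $B_{Y^*}$. Part (ii), however, takes a genuinely different route, and a more careful one. For $L^{\Phi}(m)\subseteq L^{1}(m)^{\Phi}$ the paper argues in one line that $L^{1}(m)^{\Phi}$, being a B.f.s., is a closed subspace of $L^{1}_w(m)^{\Phi}$; but completeness of a continuously embedded subspace does not by itself give closedness unless the two Luxemburg norms agree on $L^{1}(m)^{\Phi}$, which is not obvious (a $k$ admissible for the weak norm only yields $\Phi(|f|/k)\in L^{1}_w(m)$, not membership in $L^{1}(m)$). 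Your decomposition $g=(g\wedge a_n)+(g-a_n)^{+}$, with $a_n=\tfrac{1}{2}\Phi(|s_n|)$ simple and $\|(g-a_n)^{+}\|_{L^{1}_w(m)}\leq\|b_n\|_{L^{1}_w(m)}\to 0$ via Lemma \ref{LQN-Modular relations}(i), sidesteps this entirely by reducing the matter to the closedness of $L^{1}(m)$ itself in $L^{1}_w(m)$, a stated fact of the preliminaries; it thus substantiates precisely the step the paper leaves implicit. For the converse under $\Delta_2$, the paper combines Theorem \ref{OS Delta2}(i) with the external identification $\{f:\Phi(|f|)\in L^{1}(m)\}=L^{\Phi}(m)$ from \cite[Proposition 4.4]{Del}, whereas you stay inside the paper's machinery: Theorem \ref{OS Delta2}(iii) transfers $\sigma$-order continuity from $L^{1}(m)$ to $L^{1}(m)^{\Phi}$, so simple $s_n$ with $|s_n|\uparrow|f|$ converge to $f$ in $L^{1}(m)^{\Phi}$, hence in $L^{1}_w(m)^{\Phi}=L^{\Phi}_w(m)$ by automatic continuity of inclusions between q-B.f.s., placing $f$ in the closure of $\mathscr{S}(\Sigma)$. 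The trade-off: the paper's route is shorter and yields the explicit Orlicz-class description of $L^{1}(m)^{\Phi}$ along the way; yours is self-contained and airtight on the closedness point. One cosmetic slip: from $\Phi(|f|/2)\in L^{1}(m)$ you get $\tfrac{1}{2}f\in\widetilde{L^{1}(m)}^{\Phi}$ and hence $f\in L^{1}(m)^{\Phi}$, not $f\in\widetilde{L^{1}(m)}^{\Phi}$ itself.
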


\begin{proof}
(i) Suppose that $f\in L^1_w(m)^{\Phi}$ and let  $k>0$ such that $\displaystyle\Phi\left(\frac{|f|}{k}\right)\in L^1_w(m)$ with $\displaystyle\left\| \Phi\left(\frac{|f|}{k}\right) \right\|_{L^1_w(m)}\leq 1.$ Given $y^*\in B_{Y^*}$ we have $\displaystyle\Phi\left(\frac{|f|}{k}\right)\in L^1(|\langle m, y^*\rangle|)$ with $\displaystyle\left\| \Phi\left(\frac{|f|}{k}\right) \right\|_{L^1(|\langle m, y^*\rangle|)} \leq \displaystyle\left\| \Phi\left(\frac{|f|}{k}\right) \right\|_{L^1_w(m)}\leq 1.$ This implies that $f\in L^{\Phi}(|\langle m, y^*\rangle|)$ with $\|f\|_{L^{\Phi}(|\langle m, y^*\rangle|)}\leq k.$ Hence, $f\in L^{\Phi}_w(m)$ with $\|f\|_{L^{\Phi}_w(m)} \leq \|f\|_{L^1_w(m)^{\Phi}}.$

Reciprocally, suppose now that $f\in L^{\Phi}_w(m),$ write $M:=\|f\|_{L^{\Phi}_w(m)}$ and let $y^*\in B_{Y^*}.$ Since $f\in L^{\Phi}(|\langle m, y^*\rangle|)$ and $\|f\|_{L^{\Phi}(|\langle m, y^*\rangle|)} \leq M,$ we have that $\displaystyle\frac{f}{M}\in L^{\Phi}(|\langle m, y^*\rangle|)$ with  
$\displaystyle
\left\|\frac{f}{M}\right\|_{L^{\Phi}(|\langle m, y^*\rangle|)} \leq 1.$ 
Applying Theorem \ref{OS Fatou} (ii) to the space $X=L^{1}(|\langle m, y^*\rangle|),$ it follows that $\displaystyle\Phi\left(\frac{|f|}{M}\right)\in L^1(|\langle m, y^*\rangle|)$ with 
$$
\left\| \Phi\left(\frac{|f|}{M}\right) \right\|_{L^1(|\langle m, y^*\rangle|)} \leq \displaystyle\left\|\frac{f}{M}\right\|_{L^{\Phi}(|\langle m, y^*\rangle|)} \leq 1.
$$ 
Then, the arbitrariness of $y^*\in B_{Y^*}$ guarantees that $\displaystyle\Phi\left(\frac{|f|}{M}\right)\in L^1_w(m)$ with  $\displaystyle\left\| \Phi\left(\frac{|f|}{M}\right) \right\|_{L^1_w(m)} \leq 1$ and hence $f\in L^1_w(m)^{\Phi}$ with $\|f\|_{L^1_w(m)^{\Phi}} \leq M.$

(ii) Since $L^1(m)^{\Phi}$ is a B.f.s., simple functions $\mathscr{S}(\Sigma) \subseteq L^1(m)^{\Phi}$ and $L^1(m)^{\Phi}$ is a closed subspace of $L^1_w(m)^{\Phi}.$ Thus, taking in account (i), we deduce that $L^{\Phi}(m) \subseteq L^1(m)^{\Phi}.$ If in addition $\Phi\in\Delta_2,$ we have
$$
L^1(m)^{\Phi} = \{ f\in L^0(m) : \Phi(|f|)\in L^1(m) \} = L^{\Phi}(m),
$$
where the first equality is due to Theorem \ref{OS Delta2} (i) applied to $X=L^1(m)$ and the second one can be found in \cite[Proposition 4.4]{Del}.
\end{proof}

The Orlicz spaces $L^{\Phi}(m)$ have been recently employed in \cite{CFMN2} to locate the compact subsets of $L^{1}(m).$ Motivated by the idea of studying compactness in $L^{1}(\|m\|)$ in a forthcoming paper \cite{preprint}, we introduce the Orlicz spaces $L^{\Phi}(\|m\|)$ as the Orlicz spaces $X^{\Phi}$ associated to the q-B.f.s. $X = L^1(\|m\|).$ For further reference, we collect together all the information that our general theory provide about these new Orlicz spaces.

\begin{definition}
Let $\Phi$ be a Young function and $m:\Sigma\to Y$ a vector measure. We define the {\em Orlicz spaces associated to the semivariation\/} of $m$ as $L^{\Phi}(\|m\|):=L^1(\|m\|)^{\Phi}$ equipped with $\|f\|_{L^{\Phi}(\|m\|)}:= \|f\|_{L^1(\|m\|)^{\Phi}},$ for all $f\in L^{\Phi}(\|m\|).$
\end{definition}

\begin{corollary}
Let $\Phi$ be a Young function, $m:\Sigma\to Y$ a vector measure and $\mu$ any Rybakov control measure for $m.$ Then,
\begin{itemize}
\item[(i)] $L^{\Phi}(\|m\|)$ is a q-B.f.s. over $\mu$ with the $\sigma$-Fatou property.
\item[(ii)] If $\Phi\in\Delta_2,$ then $L^{\Phi}(\|m\|)$ is $\sigma$-order continuous.
\item[(iii)] $L^{\Phi}(\|m\|) \subseteq L^1(\|m\|)$ with continuous inclusion.
\end{itemize}
\end{corollary}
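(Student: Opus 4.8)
The plan is to observe that, by definition, $L^{\Phi}(\|m\|) = L^1(\|m\|)^{\Phi}$, so the entire corollary reduces to specializing the general machinery of Section 4 to the particular q-B.f.s. $X = L^1(\|m\|)$. The only external inputs needed are the facts recalled in the Preliminaries (from \cite{CFMN1}): namely that $L^1(\|m\|)$ is a q-B.f.s. over $\mu$ enjoying the $\sigma$-Fatou property and that it is $\sigma$-order continuous. With these in hand, every assertion becomes a direct invocation of a previously established result, so I do not anticipate any genuine obstacle; the work is purely a matter of matching hypotheses.

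For (i), I would first apply Theorem \ref{OS completeness} with $X = L^1(\|m\|)$: since this space is a q-B.f.s., the theorem yields that $L^1(\|m\|)^{\Phi}$ is complete and hence itself a q-B.f.s. over $\mu$. Then, using that $L^1(\|m\|)$ has the $\sigma$-Fatou property, Theorem \ref{OS Fatou}(iii) transfers this property to $L^1(\|m\|)^{\Phi} = L^{\Phi}(\|m\|)$, completing (i).

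For (ii), assuming $\Phi \in \Delta_2$, I would invoke Theorem \ref{OS Delta2}(iii). The hypothesis required there is precisely the $\sigma$-order continuity of $X = L^1(\|m\|)$, which is recalled in the Preliminaries; the theorem then delivers the $\sigma$-order continuity of $L^{\Phi}(\|m\|)$.

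Finally, for (iii), the set-theoretic inclusion $L^{\Phi}(\|m\|) = L^1(\|m\|)^{\Phi} \subseteq L^1(\|m\|)$ is exactly the inclusion $X^{\Phi} \subseteq X$ furnished by Proposition \ref{OS properties}. Its continuity follows from Remark \ref{cont inclusion}: both $L^1(\|m\|)$ and $L^{\Phi}(\|m\|)$ are q-B.f.s. (the latter by part (i)), and inclusions between q-B.f.s. are automatically continuous (as noted in the Preliminaries, citing \cite[Lemma 2.7]{ORS}). This closes the argument.
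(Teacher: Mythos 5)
Your proposal is correct and takes essentially the same route as the paper, whose proof consists precisely of specializing Proposition \ref{OS properties}, Theorems \ref{LQN properties}, \ref{OS Fatou} and \ref{OS Delta2}, and Remark \ref{cont inclusion} to $X=L^1(\|m\|)$, using the properties of that space from \cite{CFMN1} recalled in the preliminaries. The only cosmetic difference is that you obtain completeness directly from Theorem \ref{OS completeness}, whereas the paper can equally read it off from the $\sigma$-Fatou property of $L^{\Phi}(\|m\|)$ given by Theorem \ref{OS Fatou}(iii); both are valid and interchangeable here.
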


\begin{proof}
Apply Theorems \ref{LQN properties}, \ref{OS Fatou} and \ref{OS Delta2} to the q-B.f.s $X=L^1(\|m\|).$ See also Proposition~\ref{OS properties} and Remark \ref{cont inclusion}.
\end{proof}

\begin{corollary}
Let $\Phi$ be a Young function, $m:\Sigma\to Y$ a vector measure, $f\in L^{\Phi}(\|m\|)$ and $H\subseteq L^0(m).$
\begin{itemize}
\item[(i)] If $\Phi(|f|)\in L^1(\|m\|),$ then $\|f\|_{L^{\Phi}(\|m\|)} \leq \max\{1, \|\Phi(|f|)\|_{L^1(\|m\|)} \}.$
\item[(ii)] If $\|f\|_{L^{\Phi}(\|m\|)}\leq 1,$ then $\Phi(|f|)\in L^1(\|m\|)$ and $\|\Phi(|f|)\|_{L^1(\|m\|)} \leq \|f\|_{L^{\Phi}(\|m\|)}.$
\item[(iii)] If $\|f\|_{L^{\Phi}(\|m\|)}>1$ and $\Phi(|f|)\in L^1(\|m\|),$ then $\|\Phi(|f|)\|_{L^1(\|m\|)} \geq \|f\|_{L^{\Phi}(\|m\|)}.$
\item[(iv)] If $\{\Phi(|h|) : h\in H \}$ is bounded in $L^{1}(\|m\|),$ then $H$ is bounded in $L^{\Phi}(\|m\|).$
\item[(v)] If $H$ is bounded in $L^{\Phi}(\|m\|),$ then there exists a Young function $\Psi$ such that $\{\Psi(|h|) : h\in H \}$ is bounded in $L^{1}(\|m\|).$
\end{itemize}
\end{corollary}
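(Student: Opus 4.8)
The plan is to recognize that every item of this corollary is the specialization to the concrete space $X=L^1(\|m\|)$ of a general result already proved in Section~4. The only structural facts I need are that $L^1(\|m\|)$ is a q-B.f.s.\ over $\mu$ and that it possesses the $\sigma$-Fatou property; both were recorded in the Preliminaries (via \cite{CFMN1}). With these in hand, the Orlicz space $L^{\Phi}(\|m\|)=L^1(\|m\|)^{\Phi}$ falls squarely within the scope of Lemmas~\ref{OS boundedness} and \ref{LQN-Modular relations} and of Theorem~\ref{OS Fatou}. Throughout I will use the dictionary that, for $X=L^1(\|m\|)$, the membership $\Phi(|f|)\in L^1(\|m\|)$ is exactly the statement $f\in\widetilde{X}^{\Phi}$.

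Concretely, I would read off each item as follows. Item~(i) is Lemma~\ref{OS boundedness}(i) with $f\in\widetilde{X}^{\Phi}$, and item~(iv) is Lemma~\ref{OS boundedness}(ii), both applied verbatim to $X=L^1(\|m\|)$. Item~(iii) is Lemma~\ref{LQN-Modular relations}(ii), again for $X=L^1(\|m\|)$, under the hypotheses $\|f\|_{L^{\Phi}(\|m\|)}>1$ and $f\in\widetilde{X}^{\Phi}$. Item~(v) is Lemma~\ref{LQN-Modular relations}(iii) applied to $X=L^1(\|m\|)$; the Young function $\Psi$ produced there is $\Psi(t)=\Phi(t/M)$, where $M$ is any bound for $\{\|h\|_{L^{\Phi}(\|m\|)}:h\in H\}$.

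The one point that requires slightly more care, and which I regard as the main (if modest) obstacle, is item~(ii). The general modular estimate of Lemma~\ref{LQN-Modular relations}(i) only covers the open range $\|f\|_{X^{\Phi}}<1$ and leaves the boundary value $\|f\|_{X^{\Phi}}=1$ untreated, so a direct appeal to that lemma does not quite deliver the closed inequality $\|f\|_{L^{\Phi}(\|m\|)}\le 1$ claimed here. This is precisely where the $\sigma$-Fatou property of $L^1(\|m\|)$ intervenes: I would instead invoke Theorem~\ref{OS Fatou}(ii), whose hypothesis is met because $L^1(\|m\|)$ has the $\sigma$-Fatou property, to obtain both $\Phi(|f|)\in L^1(\|m\|)$ and $\|\Phi(|f|)\|_{L^1(\|m\|)}\le\|f\|_{L^{\Phi}(\|m\|)}$ for every $f$ with $\|f\|_{L^{\Phi}(\|m\|)}\le 1$. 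Apart from this substitution of Theorem~\ref{OS Fatou}(ii) for Lemma~\ref{LQN-Modular relations}(i), the proof is a mechanical transcription of the Section~4 results to the setting $X=L^1(\|m\|)$, and no genuinely new computation is needed.
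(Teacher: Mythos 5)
Your proposal is correct and takes essentially the same route as the paper, whose entire proof reads: ``Particularize Lemmas \ref{OS boundedness} and \ref{LQN-Modular relations} to $X=L^1(\|m\|).$ Note that, in fact, we can use (ii) of Theorem \ref{OS Fatou}.'' Your extra care on item (ii) --- invoking Theorem \ref{OS Fatou}(ii) via the $\sigma$-Fatou property of $L^1(\|m\|)$ to cover the boundary case $\|f\|_{L^{\Phi}(\|m\|)}=1$, which Lemma \ref{LQN-Modular relations}(i) alone does not reach --- is exactly the point of the paper's closing remark, so your account is if anything more explicit than the original.
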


\begin{proof}
Particularize Lemmas \ref{OS boundedness} and \ref{LQN-Modular relations} to $X=L^1(\|m\|).$ Note that, in fact, we can use (ii) of Theorem \ref{OS Fatou}.
\end{proof}

\begin{corollary}
Let $\Phi\in\Delta_2,$ $m:\Sigma\to Y$ a vector measure and $(f_n)_n\subseteq L^{\Phi}(\|m\|).$
\begin{itemize}
\item[(i)] $L^{\Phi}(\|m\|) = \{ f\in L^0(m) : \Phi(|f|)\in L^1(\|m\|) \}.$
\item[(ii)] $\|f_n\|_{L^{\Phi}(\|m\|)}\to 0$ if and only if $\|\Phi(|f_n|)\|_{L^1(\|m\|)}\to 0.$
\end{itemize}
\end{corollary}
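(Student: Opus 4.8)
The plan is to derive this final corollary directly from the general results of Section 4 by specializing to the q-B.f.s. $X = L^1(\|m\|)$, exactly as suggested by the phrase ``Particularize'' in the surrounding proofs. The two statements correspond precisely to parts (i) and (ii) of Theorem \ref{OS Delta2}, so the work reduces to verifying that the hypotheses of that theorem are met and then translating the conclusions into the notation $L^{\Phi}(\|m\|) = L^1(\|m\|)^{\Phi}$ and $\|\cdot\|_{L^{\Phi}(\|m\|)} = \|\cdot\|_{L^1(\|m\|)^{\Phi}}$.

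First I would recall that $L^1(\|m\|)$ is a quasi-normed function space over any Rybakov control measure $\mu$ for $m$ (indeed a q-B.f.s., as stated in the Preliminaries), so it is a legitimate choice of $X$ in Theorem \ref{OS Delta2}. The hypothesis $\Phi \in \Delta_2$ is given. For part (i), I would invoke Theorem \ref{OS Delta2}(i), which asserts $X^{\Phi} = \widetilde{X}^{\Phi}$; unfolding the definition of the Orlicz class $\widetilde{X}^{\Phi} = \{ f \in L^0(\mu) : \Phi(|f|) \in X \}$ with $X = L^1(\|m\|)$ yields exactly
$$
L^{\Phi}(\|m\|) = L^1(\|m\|)^{\Phi} = \{ f \in L^0(m) : \Phi(|f|) \in L^1(\|m\|) \},
$$
where one uses that $L^0(m)$ and $L^0(\mu)$ coincide for a Rybakov control measure $\mu$. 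For part (ii), I would simply apply Theorem \ref{OS Delta2}(ii) to $X = L^1(\|m\|)$, which gives that $\|f_n\|_{X^{\Phi}} \to 0$ if and only if $\|\Phi(|f_n|)\|_X \to 0$; rewriting with the semivariation notation produces the claimed equivalence.

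There is essentially no obstacle here, since the corollary is a direct transcription of Theorem \ref{OS Delta2} into the vector-measure setting. The only points requiring a word of care are notational: one must confirm that the ambient space of measurable functions is the same whether written as $L^0(m)$ or $L^0(\mu)$ (true because $\mu$ is a Rybakov control measure, so $m$-null and $\mu$-null sets coincide), and that the quasi-norm $\|\cdot\|_{L^{\Phi}(\|m\|)}$ is by definition the Luxemburg quasi-norm of $L^1(\|m\|)^{\Phi}$. Both are already recorded in the Definition preceding this corollary, so the proof is a one-line citation of Theorem \ref{OS Delta2} parts (i) and (ii).
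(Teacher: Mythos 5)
Your proposal is correct and coincides with the paper's own proof, which reads simply ``Apply Theorem \ref{OS Delta2} to the space $X=L^1(\|m\|)$''; your additional remarks on the identification $L^0(m)=L^0(\mu)$ for a Rybakov control measure and on the definition of $\|\cdot\|_{L^{\Phi}(\|m\|)}$ are accurate bookkeeping the paper leaves implicit.
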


\begin{proof}
Apply Theorem \ref{OS Delta2} to the space $X=L^1(\|m\|).$
\end{proof}

\section{Applications: Interpolation of Orlicz spaces}

In this section all the q-B.f.s. will be supposed to be complex. This means that $L^0(\mu)$ will be assumed to be in fact the space of all ($\mu$-a.e. equivalence classes of) $\mathbb{C}$-valued measurable functions on $\Omega.$ Recall that a complex q-B.f.s $X$ over $\mu$ is the {\em complexification\/} of the real q-B.f.s. $X_{\mathbb{R}}:=X\cap L^0_{\mathbb{R}}(\mu),$ where $L^0_{\mathbb{R}}(\mu)$ is the space of all ($\mu$-a.e. equivalence classes of) $\mathbb{R}$-valued measurable functions on $\Omega$ (see \cite[p.24]{ORS} for more details) and this allows to extend all the real q-B.f.s. defined above to complex q-B.f.s. following a standard argument.

The complex method of interpolation, $[X_0,X_1]_{\theta}$ with $0<\theta<1,$ for pairs $(X_0,X_1)$ of quasi-Banach spaces was introduced in \cite{KaMi} as a natural extension of Calder\'on's original definition for Banach spaces. It relies on a theory of analytic functions with values in quasi-Banach spaces which was developed in \cite{Kal1} and \cite{Kal2}. It is important to note that there is no analogue of the Maximum Modulus Principle for general quasi-Banach spaces, but there is a wide subclass of quasi-Banach spaces called {\em analytically convex\/} (A-convex) in which that principle does hold. For a q-B.f.s. $X$ it can be proved that analytical convexity is equivalent to {\em lattice convexity\/} (L-convexity), i.e., there exists $0<\varepsilon<1$ so that if $f\in X$ and $0\leq f_i\leq f,$ $i=1,\dots,n,$ satisfy $\displaystyle\frac{f_1+\cdots + f_n}{n}\geq (1-\varepsilon)f,$ then $\displaystyle\max_{1\leq i \leq n}\|f_i\|_{X} \geq \varepsilon \|f\|_{X}$ (see \cite[Theorem 4.4]{Kal2}). This is also equivalent to $X$ be $s$-convex for some $s>0$ (see \cite[Theorem~2.2]{Kal0}). We recall that $X$ is called $s$-convex if there exists $C\geq 1$ such that
$$
\left\| \left( \sum_{k=1}^{n}|f_k|^s \right)^{\frac{1}{s}} \right\|_{X} \leq C \left( \sum_{k=1}^{n} \|f_k\|_{X}^{s} \right)^{\frac{1}{s}}
$$
for all $n\in\mathbb{N}$ and $f_1, \dots, f_n\in X.$ Observe that, $X$ is $s$-convex if and only if its $s$-th power $X_{[s]}$ is $1$-convex, where the {\em $s$-th power\/} $X_{[s]}$ of a q-B.f.s. $X$ over $\mu$ (for any $0<s<\infty$) is the q-B.f.s. $X_{[s]}:=\displaystyle\left\{ f\in L^0(\mu) : |f|^{\frac{1}{s}}\in X \right\}$ equipped with the quasi-norm $\|f\|_{X_{[s]}} = \displaystyle\left\| |f|^{\frac{1}{s}} \right\|_{X}^{s},$ for all $f\in X_{[s]}$ (see \cite[Proposition~2.22]{ORS}).

The following result provide a condition under which the L-convexity of $X$ can be transferred to its Orlicz space $X^{\Phi}.$ When $X$ possesses the $\sigma$-Fatou property, this can be derived from \cite[Proposition 3.3]{KMP}, but we make apparent that this property can be dropped.
Recall that a function $\psi$ on the semiaxis $[0,\infty)$ is said to be {\em quasiconcave\/} if $\psi(0)=0,$ $\psi(t)$ is positive and increasing for $t>0$ and $\displaystyle\frac{\psi(t)}{t}$ is decreasing for $t>0.$ Observe that a quasiconcave function $\psi$ satisfies the following inequalities for all $t\geq 0$:
$$
\left\{
\begin{array}{ccl}
\psi(\alpha t) \geq \alpha \, \psi(t) & \mbox{ if } & 0\leq\alpha\leq 1,
\\
\psi(\alpha t) \leq \alpha \, \psi(t) & \mbox{ if } & \alpha\geq 1.
\end{array}
\right.
$$

\begin{theorem} 
\label{OS L-convexity}
If $X$ is an L-convex q-B.f.s. and $\Phi\in\Delta_2,$ then $X^{\Phi}$ is L-convex.
\end{theorem}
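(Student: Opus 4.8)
The plan is to reduce $L$-convexity of $X^{\Phi}$ to the $1$-convexity of a suitable power, exploiting the two equivalences recalled above: that $s$-convexity of a q-B.f.s. $Z$ is the same as $1$-convexity of its $s$-th power $Z_{[s]}$, and that $L$-convexity is the same as $s$-convexity for some $s>0$. Since $X$ is $L$-convex, I would fix $s>0$ with $X$ being $s$-convex, so that $X_{[s]}$ is $1$-convex and hence, after an equivalent lattice renorming, a genuine B.f.s., which I call $Y$. It then suffices to show that $(X^{\Phi})_{[s]}$ is $1$-convex, since this yields $s$-convexity of $X^{\Phi}$ and therefore its $L$-convexity.

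First I would identify $(X^{\Phi})_{[s]}$ with an Orlicz-type space over $X_{[s]}$. Because $\Phi\in\Delta_2$, Theorem \ref{OS Delta2}(i) gives $X^{\Phi}=\widetilde{X}^{\Phi}$, so $f\in(X^{\Phi})_{[s]}$ iff $|f|^{1/s}\in X^{\Phi}$ iff $\Phi(|f|^{1/s})\in X$ iff $\Phi_{s}(|f|)\in X_{[s]}$, where
$$
\Phi_{s}(t):=\Phi\!\left(t^{1/s}\right)^{s},\qquad t\ge 0 .
$$
A direct computation with the definition of the $s$-th power quasi-norm then shows that $\|f\|_{(X^{\Phi})_{[s]}}=\||f|^{1/s}\|_{X^{\Phi}}^{s}$ equals the Luxemburg functional $\inf\{\lambda>0:\|\Phi_{s}(|f|/\lambda)\|_{X_{[s]}}\le 1\}$ generated by $\Phi_{s}$ and $X_{[s]}$; the key cancellation is $\Phi_{s}(w)^{1/s}=\Phi(w^{1/s})$.

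The difficulty is that $\Phi_{s}$ need not be convex, so it is not literally a Young function and Proposition \ref{LQN properties} cannot be applied to it directly. Here I would use that $\Phi_{s}$ is nevertheless equivalent to a Young function. Since $\Phi(x)/x$ is nondecreasing (because $\Phi(\alpha x)\le\alpha\Phi(x)$ for $\alpha\le 1$), the substitution $x=t^{1/s}$ gives $\Phi_{s}(t)/t=[\Phi(x)/x]^{s}$, which is nondecreasing in $t$ for every $s>0$. Now a nonnegative increasing $\psi$ with $\psi(0)=0$ and $\psi(t)/t$ nondecreasing is equivalent to the convex function $\Theta(t):=\int_{0}^{t}\psi(u)/u\,du$, because the monotonicity of $\psi(u)/u$ yields the chain $\Theta(t/2)\le\psi(t/2)\le\Theta(t)\le\psi(t)$, hence $\Theta(t)\le\psi(t)\le\Theta(2t)$. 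Applied to $\psi=\Phi_{s}$ this produces a Young function $\Theta$ with $\Theta(t)\le\Phi_{s}(t)\le\Theta(2t)$, and $\Theta\in\Delta_2$ since $\Phi_{s}\in\Delta_2$ (which itself follows from $\Phi\in\Delta_2$ via $\Phi(2^{1/s}x)\le C\Phi(x)$). I expect this passage to the equivalent convex function to be the main obstacle, since it is exactly what replaces the $\sigma$-Fatou hypothesis used in \cite{KMP}.

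Finally I would assemble the pieces. The pointwise bound $\Theta(t)\le\Phi_{s}(t)\le\Theta(2t)$ forces the two Luxemburg functionals to be equivalent (within a factor $2$), so $(X^{\Phi})_{[s]}$ coincides, with equivalent quasi-norms, with $(X_{[s]})^{\Theta}$; moreover equivalent base norms give equivalent Orlicz quasi-norms (again a consequence of $\Phi(\alpha t)\le\alpha\Phi(t)$ for $\alpha\le1$), so I may replace $X_{[s]}$ by the B.f.s. $Y$. As $Y$ is a normed function space and $\Theta$ is a Young function, Proposition \ref{LQN properties} gives that $Y^{\Theta}$ is a normed function space, i.e. $1$-convex; since $1$-convexity is preserved under equivalent renorming, $(X^{\Phi})_{[s]}$ is $1$-convex. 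Therefore $X^{\Phi}$ is $s$-convex and hence $L$-convex, which is the desired conclusion.
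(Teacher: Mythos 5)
Your proof is correct, and it takes a genuinely different route from the paper's. The paper argues directly at the level of the L-convexity inequality: taking the $\Delta_2$ exponent $s>1$ with $\Phi(2t)\leq s\Phi(t),$ it shows via $t\Phi'(t)\leq\Phi(2t)$ that $\Phi(t)/t^{s}$ is decreasing, so that $\psi(t):=\Phi\left(t^{1/s}\right)$ is quasiconcave; then, normalizing $\|f\|_{X^{\Phi}}=1$ (which forces $\|\Phi(f)\|_{X}\geq 1$), it pushes the averaging hypothesis through $\Phi$ using convexity and quasiconcavity to get $\frac{1}{n}\sum_{i}\Phi(f_i)\geq(1-\delta)^{s}\Phi(f)=(1-\varepsilon)\Phi(f)$ and concludes with Lemma \ref{LQN-Modular relations}(i), producing an explicit L-convexity constant $\delta$ for $X^{\Phi}.$ You instead let $s$ be a convexity exponent of $X$ (note your $s$ and the paper's $s$ play entirely different roles) and prove the structural identity that $(X^{\Phi})_{[s]}$ coincides, up to equivalent quasi-norms, with $(X_{[s]})^{\Theta},$ where $\Theta$ is the convexification of $\Phi_{s}(t)=\Phi\left(t^{1/s}\right)^{s}.$ I checked your individual steps and they hold: the cancellation $\Phi_{s}(w)^{1/s}=\Phi\left(w^{1/s}\right)$ does give the Luxemburg identification; the monotonicity of $\Phi_{s}(t)/t$ is valid for every $s>0$; the bound $\psi(t/2)\leq\Theta(t)\leq\psi(t)$ is correct (the lower bound since $\int_{t/2}^{t}\psi(u)u^{-1}\,du\geq\frac{t}{2}\cdot\frac{\psi(t/2)}{t/2}$); Luxemburg norms do transfer both under pointwise equivalence of the Young functions and under equivalent lattice renorming of the base space (both via $\Phi(\alpha t)\leq\alpha\Phi(t)$ for $\alpha\leq 1$); and the renorming of the $1$-convex $X_{[s]}$ into a B.f.s. $Y$ lets Proposition \ref{LQN properties} deliver $1$-convexity of $Y^{\Theta}.$ As for what each approach buys: the paper's proof is self-contained, quantitative (explicit constant $\delta$ with $(1-\delta)^{s}=1-\varepsilon$), and only uses the trivial direction of Kalton's equivalence, whereas yours leans on the deep implication of \cite[Theorem 2.2]{Kal0} (L-convex $\Rightarrow$ $s$-convex) plus a renorming lemma, but in exchange yields a result of independent interest --- powers of Orlicz spaces are Orlicz spaces over powers, in the spirit of \cite{KMP} but without the $\sigma$-Fatou hypothesis. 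Notably, your use of $\Delta_2$ is confined to invoking Theorem \ref{OS Delta2}(i) for the set identification, and since the same computation works at an arbitrary scale $c>0,$ your argument appears to survive without $\Delta_2$ at all, while the paper's quasiconcavity device genuinely requires it; the trade-off is that your route gives no explicit control of the resulting L-convexity constant.
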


\begin{proof}
Since $\Phi\in\Delta_2,$ there exists $s>1$ such that $\Phi(2t)\leq s\Phi(t)$ for all $t\geq 0.$ From the inequality
$$
t \Phi'(t)\leq \int_{t}^{2t} \Phi'(u)\, du \leq \int_{0}^{2t} \Phi'(u)\, du = \Phi(2t)\leq s\Phi(t), \ t>0
$$
it is easy to check that $\displaystyle\frac{\Phi(t)}{t^s}$ is decreasing and then $\displaystyle\frac{\Phi\left(t^{\frac{1}{s}}\right)}{t}$ so is. Therefore, the function $\psi(t):=\Phi\left(t^{\frac{1}{s}}\right)$ is quasiconcave. Take $0<\delta<1$ such that $(1-\delta)^s=1-\varepsilon,$ where $\varepsilon$ is the constant from the L-convexity of $X.$ Let $f\in X^{\Phi}$ and $0\leq f_i\leq f,$ $i=1,\dots,n$ satisfying $\displaystyle\frac{f_1+\cdots + f_n}{n}\geq (1-\delta)f.$ We can also assume that $\|f\|_{X^{\Phi}}=1$ without loss of generality. Note that this implies $\|\Phi(f)\|_{X} \geq 1.$ If we suppose, on the contrary, that $\|\Phi(f)\|_{X}<1$ and we take $0<k<1$ such that $\|\Phi(f)\|_{X} < k^s < 1,$ then
$$
\left\| \Phi\left(\frac{f}{k}\right) \right\|_X = \left\| \psi\left(  \frac{f^s}{k^s}  \right) \right\|_{X} \leq \frac{1}{k^s} \| \psi(f^s) \|_{X} = \frac{1}{k^s} \| \Phi(f) \|_{X} <1,
$$
and therefore $\|f\|_{X^{\Phi}}<k<1.$ Moreover, we have $0\leq\Phi(f_i)\leq\Phi(f)\in X$ and
\begin{eqnarray*}
\frac{\Phi(f_1)+\cdots +\Phi(f_n)}{n} & \geq & \Phi\left(\frac{f_1+\cdots + f_n}{n}\right) \ \geq \ \Phi( (1-\delta)f )
\\
& \geq &  (1-\delta)^s \psi(f^s) \ = \ (1-\delta)^s \Phi(f) \ = \ (1-\varepsilon) \Phi(f).
\end{eqnarray*}
Thus, the L-convexity of $X$ implies that $\displaystyle\max_{1\leq i \leq n}\|\Phi(f_i)\|_{X} \geq \varepsilon \|\Phi(f)\|_{X} \geq \varepsilon$ and hence $\displaystyle\max_{1\leq i \leq n}\|f_i\|_{X^{\Phi}} \geq \varepsilon>\delta$ by (i) of Lemma \ref{LQN-Modular relations}.
\end{proof}

The {\em Calder\'{o}n product\/} $X_0^{1-\theta}X_1^{\theta}$ of two q-B.f.s. $X_0$ and $X_1$ over $\mu$ is the q-B.f.s. of all functions $f\in L^0(\mu)$ such that there exist $f_0\in B_{X_0},$ $f_1\in B_{X_1}$ and $\lambda>0$ for which
\begin{equation} \label{defCL}
|f(w)| \leq \lambda |f_0(w)|^{1-\theta} |f_1(w)|^{\theta}, \quad w\in\Omega \ \ (\mu\mbox{-a.e.})
\end{equation}
endowed with the quasi-norm $\|f\|_{X_0^{1-\theta}X_1^{\theta}}= \inf \lambda,$ where the infimum is taken over all $\lambda$ satisfying (\ref{defCL}). The complex method gives the result predicted by the Calder\'{o}n product for nice pairs of q-B.f.s. (see \cite[Theorem 3.4]{KaMi}).

\begin{theorem}
\label{Complex Method = Calderon Product}
Let $\Omega$ be a Polish space and let $\mu$ be a finite Borel measure on $\Omega.$ Let $X_0,$ $X_1$ be a pair of $\sigma$-order continuous L-convex q-B.f.s. over $\mu.$ Then $X_0+X_1$ is L-convex and $[X_0,X_1]_{\theta} = X_0^{1-\theta}X_1^{\theta}$ with equivalence of quasi-norms.
\end{theorem}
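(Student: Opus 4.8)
The plan is to obtain this as a direct application of \cite[Theorem~3.4]{KaMi}, which is stated precisely for pairs of $\sigma$-order continuous, L-convex q-B.f.s. over a Polish space carrying a finite Borel measure, and which yields both that $X_0+X_1$ is L-convex and that $[X_0,X_1]_\theta = X_0^{1-\theta}X_1^\theta$ with equivalent quasi-norms. Thus the only thing to check is that the hypotheses of that theorem are met, and this is immediate: the assumptions on $(\Omega,\mu)$ and on the pair $(X_0,X_1)$ in our statement are exactly the ones required there. For this reason I would not reprove the quasi-Banach interpolation machinery but merely invoke it; still, it is worth recording the mechanism underlying the two inclusions, which is what makes the L-convexity and $\sigma$-order continuity hypotheses necessary.

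For the inclusion $X_0^{1-\theta}X_1^\theta \subseteq [X_0,X_1]_\theta$ I would argue directly. Given $f$ with $|f|\le \lambda|f_0|^{1-\theta}|f_1|^\theta$ and $f_i\in B_{X_i}$, write $f=u|f|$ with $u$ unimodular and consider the $(X_0+X_1)$-valued function $F(z):=\lambda\, u\, |f_0|^{1-z}|f_1|^{z}$, where the powers are read as $\exp\big((1-z)\log|f_0|\big)$ and $\exp\big(z\log|f_1|\big)$ with the usual conventions where $f_i$ vanishes. One checks analyticity on the strip and computes $|F(it)|=\lambda|f_0|\in\lambda B_{X_0}$ on $\operatorname{Re} z=0$ and $|F(1+it)|=\lambda|f_1|\in\lambda B_{X_1}$ on $\operatorname{Re} z=1$; since $|F(\theta)|=\lambda|f_0|^{1-\theta}|f_1|^\theta$ dominates $|f|$, the ideal property places $f$ in the interpolation space with $\|f\|_{[X_0,X_1]_\theta}\lesssim\lambda$.

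The reverse inclusion $[X_0,X_1]_\theta\subseteq X_0^{1-\theta}X_1^\theta$ is the hard direction and is where the hypotheses do their work. Given $f=F(\theta)$ for an admissible $F$, the scalar functions $z\mapsto\log|F(z)(\omega)|$ are subharmonic on the strip, so the harmonic-measure (Poisson-kernel) estimate at $\theta$ furnishes a pointwise bound $|f(\omega)|\le g_0(\omega)^{1-\theta}g_1(\omega)^\theta$, where $g_0,g_1$ are the exponentiated logarithmic averages of the boundary traces of $F$ on $\operatorname{Re} z=0$ and $\operatorname{Re} z=1$, respectively. The crux is to show $g_0\in X_0$ and $g_1\in X_1$ with norms controlled by the boundary suprema of $F$: this is exactly the Maximum Modulus Principle for $(X_0+X_1)$-valued analytic functions, valid because for a q-B.f.s. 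L-convexity coincides with analytic convexity. The $\sigma$-order continuity is used to reduce to simple functions and to identify $[X_0,X_1]_\theta$ as the relevant closure, while the Polish/Borel setting supplies the measurable selection of the factors $g_0,g_1$.

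I expect the main obstacle to be precisely this reverse inclusion: producing a \emph{measurable}, norm-controlled geometric-mean factorization in the quasi-Banach range, without the $\sigma$-Fatou property and with no classical Maximum Modulus Principle at hand. This is the content of the analytic-function theory of \cite{Kal1,Kal2} on which \cite[Theorem~3.4]{KaMi} rests, and it is the reason I would cite that result rather than reconstruct it.
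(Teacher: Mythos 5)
Your proposal is correct and matches the paper exactly: the paper states this result as a direct quotation of \cite[Theorem~3.4]{KaMi} with no proof of its own, precisely because the hypotheses (Polish space, finite Borel measure, $\sigma$-order continuous L-convex q-B.f.s.) coincide with those of Kalton--Mitrea. Your additional sketch of the two inclusions is a faithful outline of the machinery behind that cited theorem, but the operative step --- invoking it rather than reproving it --- is identical to what the authors do.
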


On the other hand, it is easy to compute the Calder\'{o}n product of two Orlicz spaces associated to the same q-B.f.s:

\begin{proposition} 
\label{CP of Orlicz spaces}
Let $X$ be a q-B.f.s. over $\mu,$ $\Phi_0,$ $\Phi_1$ Young functions, $0< \theta <1$ and $\Phi$ such that $\Phi^{-1} := (\Phi_{0}^{-1})^{1-\theta} (\Phi_{1}^{-1})^{\theta}.$ Then $\left(X^{\Phi_0}\right)^{1-\theta}\left(X^{\Phi_1}\right)^{\theta} = X^{\Phi}.$ \end{proposition}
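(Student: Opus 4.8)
The plan is to establish the set equality by a double inclusion, tracking the quasi-norms: one inclusion will come out isometrically, the other up to the quasi-triangle constant $K$ of $X$, so that the two spaces coincide with equivalent quasi-norms. Throughout I write $\phi_i:=\Phi_i^{-1}$ and $\phi:=\Phi^{-1}=\phi_0^{1-\theta}\phi_1^{\theta}$, and I use the Luxemburg description of the quasi-norms together with Lemmas \ref{OS boundedness} and \ref{LQN-Modular relations}.

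For the inclusion $X^{\Phi}\subseteq\left(X^{\Phi_0}\right)^{1-\theta}\left(X^{\Phi_1}\right)^{\theta}$ I would exhibit an explicit (tight) factorization. Given $f\in X^{\Phi}$ and any $k>\|f\|_{X^{\Phi}}$, monotonicity of $\Phi$ gives $\Phi(|f|/k)\in X$ with $\|\Phi(|f|/k)\|_X\le 1$; set $h_0:=\phi_0(\Phi(|f|/k))$ and $h_1:=\phi_1(\Phi(|f|/k))$. The defining relation $\phi=\phi_0^{1-\theta}\phi_1^{\theta}$ gives $h_0^{1-\theta}h_1^{\theta}=\phi(\Phi(|f|/k))=|f|/k$ pointwise, while $\Phi_0(h_0)=\Phi(|f|/k)=\Phi_1(h_1)$ shows $h_0\in\widetilde{X}^{\Phi_0}$ and $h_1\in\widetilde{X}^{\Phi_1}$ with modulars at most $1$; Lemma \ref{OS boundedness}(i) then yields $\|h_0\|_{X^{\Phi_0}}\le 1$ and $\|h_1\|_{X^{\Phi_1}}\le 1$. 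Thus $|f|\le k\,h_0^{1-\theta}h_1^{\theta}$ is an admissible factorization, so $\|f\|_{(X^{\Phi_0})^{1-\theta}(X^{\Phi_1})^{\theta}}\le k$; letting $k\downarrow\|f\|_{X^{\Phi}}$ gives $\|f\|_{(X^{\Phi_0})^{1-\theta}(X^{\Phi_1})^{\theta}}\le\|f\|_{X^{\Phi}}$.

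For the reverse inclusion the key is the pointwise inequality
\begin{equation*}
\Phi\left(a^{1-\theta}b^{\theta}\right)\le\max\{\Phi_0(a),\Phi_1(b)\},\qquad a,b\ge 0,
\end{equation*}
which I would prove by a case split: if, say, $\Phi_0(a)\ge\Phi_1(b)$ and $s:=\Phi_0(a)$, then $b\le\phi_1(s)$ and $a=\phi_0(s)$, so $a^{1-\theta}b^{\theta}\le\phi_0(s)^{1-\theta}\phi_1(s)^{\theta}=\phi(s)$ and hence $\Phi(a^{1-\theta}b^{\theta})\le s$. Given any admissible factorization $|f|\le\lambda|f_0|^{1-\theta}|f_1|^{\theta}$ with $f_0\in B_{X^{\Phi_0}}$ and $f_1\in B_{X^{\Phi_1}}$, applying this to $|f_0|/(1+\varepsilon)$ and $|f_1|/(1+\varepsilon)$ (to get past the boundary of the unit balls, since we do not assume the $\sigma$-Fatou property) produces $\Phi(|f|/(\lambda(1+\varepsilon)))\le\Phi_0(|f_0|/(1+\varepsilon))+\Phi_1(|f_1|/(1+\varepsilon))\in X$, so $f\in X^{\Phi}$. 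Lemma \ref{LQN-Modular relations}(i) bounds each of the two modulars by $1$, so axiom (Q3) gives $\|\Phi(|f|/(\lambda(1+\varepsilon)))\|_X\le 2K$, and Lemma \ref{OS boundedness}(i) then yields $\|f\|_{X^{\Phi}}\le 2K\lambda(1+\varepsilon)$. Letting $\varepsilon\downarrow 0$ and taking the infimum over factorizations gives $\|f\|_{X^{\Phi}}\le 2K\,\|f\|_{(X^{\Phi_0})^{1-\theta}(X^{\Phi_1})^{\theta}}$.

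I expect the reverse inclusion to be the main obstacle. The factorization direction is essentially forced by the relation $\Phi^{-1}=(\Phi_0^{-1})^{1-\theta}(\Phi_1^{-1})^{\theta}$ and is isometric, but for the reverse one must compare $\Phi(|f_0|^{1-\theta}|f_1|^{\theta})$ with the modulars $\Phi_0(|f_0|)$ and $\Phi_1(|f_1|)$, and no pointwise convex-combination bound is available --- only the weaker $\max$ bound above. Passing from the pointwise $\max$ to an $X$-norm estimate costs the factor $2K$, so in the general quasi-Banach setting the identity holds up to equivalence of quasi-norms; an isometric statement, as for $L^1$-based Orlicz spaces, would require a Hölder-type inequality special to $X=L^1$. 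Two technical points also need care: checking that $\phi=\phi_0^{1-\theta}\phi_1^{\theta}$ is concave and increasing, so that $\Phi$ is a genuine Young function, and the $(1+\varepsilon)$-rescaling used to reach the boundary of the unit balls without invoking $\sigma$-Fatou.
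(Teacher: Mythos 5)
Your proof is correct and follows essentially the same route as the paper's: for $X^{\Phi}\subseteq (X^{\Phi_0})^{1-\theta}(X^{\Phi_1})^{\theta}$ you factor $|f|$ through $h=\Phi(|f|/k)$ via $f_i=\Phi_i^{-1}(h)$, exactly as the paper does, and for the reverse inclusion your pointwise inequality $\Phi(a^{1-\theta}b^{\theta})\le\max\{\Phi_0(a),\Phi_1(b)\}$ is just the pointwise form of the paper's step $|f|\le\lambda c\,(\Phi_0^{-1}(h_0))^{1-\theta}(\Phi_1^{-1}(h_1))^{\theta}\le\lambda c\,\Phi^{-1}(h_0+h_1)$, both resting on monotonicity of the $\Phi_i^{-1}$ and the ideal property. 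The only difference is bookkeeping: the paper proves the set equality alone, since equivalence of the quasi-norms is then automatic from the continuity of inclusions between q-B.f.s.\ (\cite[Lemma 2.7]{ORS}, quoted in Section 2), so your $(1+\varepsilon)$-rescaling and the explicit constant $2K$ are correct but dispensable.
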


\begin{proof}
Given $f\in X^{\Phi},$ there exists $c>0$ such that $h:=\displaystyle\Phi\left(\frac{|f|}{c}\right)\in X$ and hence $f_0:=\Phi_0^{-1}(h)\in X^{\Phi_0}$ and $f_1:=\Phi_1^{-1}(h)\in X^{\Phi_1}.$ Taking $\alpha:=\max\{ \|f_0\|_{X^{\Phi_0}}, \|f_1\|_{X^{\Phi_1}} \},$ it follows that
\begin{eqnarray*}
|f| & = & c\, \Phi^{-1}(h) \ = \ c \, (\Phi_0^{-1}(h))^{1-\theta} (\Phi_1^{-1}(h))^{\theta} \ = \ c |f_0|^{1-\theta} |f_1|^{\theta}\\
& \leq & c\alpha \left(\frac{f_0}{\alpha}\right)^{1-\theta} \left(\frac{f_1}{\alpha}\right)^{\theta},
\end{eqnarray*}
which yields $f\in \left(X^{\Phi_0}\right)^{1-\theta}\left(X^{\Phi_1}\right)^{\theta}.$

Conversely, if $f\in \left(X^{\Phi_0}\right)^{1-\theta}\left(X^{\Phi_1}\right)^{\theta},$ then there exist $\lambda>0,$ $f_0\in X^{\Phi_0}$ and $f_1\in X^{\Phi_1}$ such that $|f|\leq \lambda |f_0|^{1-\theta} |f_1|^{\theta}.$ This implies the existence of $c>0$ such that $h_0 := \displaystyle\Phi_0\left(\frac{|f_0|}{c}\right)\in X$ and $h_1 := \displaystyle\Phi_1\left(\frac{|f_1|}{c}\right)\in X.$ Thus, taking $h:=h_0+h_1\in X,$ we deduce that
\begin{eqnarray*}
|f| & \leq & \lambda |f_0|^{1-\theta} |f_1|^{\theta} = \lambda c \left(\frac{|f_0|}{c}\right)^{1-\theta} \left(\frac{|f_1|}{c}\right)^{\theta} = \lambda c (\Phi_0^{-1}(h_0))^{1-\theta} (\Phi_1^{-1}(h_1))^{\theta}\\
 & \leq & \lambda c (\Phi_0^{-1}(h))^{1-\theta} (\Phi_1^{-1}(h))^{\theta} = \lambda c \Phi^{-1}(h) \in X^{\Phi},
\end{eqnarray*}
and hence $f\in X^{\Phi}.$
\end{proof}

Combining the three previous results, we obtain conditions under which the complex method applied to Orlicz spaces associated to a q-B.f.s. over $\mu$ keeps on producing an Orlicz space associated to the same q-B.f.s.

\begin{corollary} 
\label{CM of Orlicz spaces}
Let $\Omega$ be a Polish space and let $\mu$ be a finite Borel measure on $\Omega.$ Let $X$ be an L-convex, $\sigma$-order continuous q-B.f.s. over $\mu,$ $\Phi_0, \Phi_1\in\Delta_2,$ $0<\theta<1$ and $\Phi$ such that $\Phi^{-1} := (\Phi_{0}^{-1})^{1-\theta} (\Phi_{1}^{-1})^{\theta}.$ Then,  
$$
\left[X^{\Phi_0},X^{\Phi_1}\right]_{\theta} = X^{\Phi}.
$$
\end{corollary}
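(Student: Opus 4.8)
The plan is to reduce everything to the three results immediately preceding the statement, so that the corollary follows once we check that the pair $(X^{\Phi_0}, X^{\Phi_1})$ satisfies the hypotheses of Theorem \ref{Complex Method = Calderon Product} and then identify the resulting Calder\'on product via Proposition \ref{CP of Orlicz spaces}. The real content lies in those three theorems; what remains is essentially bookkeeping of hypotheses.

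First I would establish that each of $X^{\Phi_0}$ and $X^{\Phi_1}$ is a $\sigma$-order continuous, L-convex q-B.f.s. over $\mu.$ Completeness comes from Theorem \ref{OS completeness} (so each $X^{\Phi_i}$ is indeed a q-B.f.s., since $X$ is one); $\sigma$-order continuity comes from Theorem \ref{OS Delta2}(iii), using that $X$ is $\sigma$-order continuous and $\Phi_i\in\Delta_2$; and L-convexity comes from Theorem \ref{OS L-convexity}, using that $X$ is L-convex and $\Phi_i\in\Delta_2.$ This is precisely where the hypotheses $\Phi_0,\Phi_1\in\Delta_2$ together with the L-convexity and $\sigma$-order continuity of $X$ are consumed. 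Note that no $\sigma$-Fatou assumption on $X$ is required at any point.

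With both $X^{\Phi_0}$ and $X^{\Phi_1}$ now known to be $\sigma$-order continuous L-convex q-B.f.s. over the finite Borel measure $\mu$ on the Polish space $\Omega,$ I would apply Theorem \ref{Complex Method = Calderon Product} to the pair $(X^{\Phi_0}, X^{\Phi_1})$ to obtain
$$
\left[X^{\Phi_0}, X^{\Phi_1}\right]_{\theta} = \left(X^{\Phi_0}\right)^{1-\theta}\left(X^{\Phi_1}\right)^{\theta}
$$
with equivalence of quasi-norms. Finally, Proposition \ref{CP of Orlicz spaces}, applied with the prescribed $\Phi^{-1} = (\Phi_0^{-1})^{1-\theta}(\Phi_1^{-1})^{\theta},$ identifies the right-hand side as $X^{\Phi},$ yielding $\left[X^{\Phi_0}, X^{\Phi_1}\right]_{\theta} = X^{\Phi}.$

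Since the genuine work is carried out in the three cited results, I do not anticipate a real obstacle here. The only point requiring care is verifying that the $\Delta_2$ membership of \emph{both} $\Phi_0$ and $\Phi_1$ is exactly what promotes the $\sigma$-order continuity and the L-convexity of $X$ to the two Orlicz spaces simultaneously, and that the Polish/finite-Borel-measure framework demanded by Theorem \ref{Complex Method = Calderon Product} is furnished by the corollary's own hypotheses. One should also observe that $\Phi$ is a legitimate Young function (equivalently, that $X^{\Phi}$ is well defined), but this is already presupposed in the statement of Proposition \ref{CP of Orlicz spaces} and may therefore be taken as given.
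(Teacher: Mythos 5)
Your argument is exactly the paper's own proof: the paper likewise invokes Theorems \ref{OS Delta2} and \ref{OS L-convexity} to see that $X^{\Phi_0}$ and $X^{\Phi_1}$ are $L$-convex, $\sigma$-order continuous q-B.f.s., and then concludes by combining Theorem \ref{Complex Method = Calderon Product} with Proposition \ref{CP of Orlicz spaces}. Your extra remarks (completeness via Theorem \ref{OS completeness}, and the absence of any $\sigma$-Fatou hypothesis) are correct bookkeeping that the paper leaves implicit.
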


\begin{proof}
According to Theorems \ref{OS Delta2} and \ref{OS L-convexity}, the hypotheses guarantee that $X^{\Phi_0}$ and $X^{\Phi_1}$ are $L$-convex, $\sigma$-order continuous q-B.f.s. Therefore, the result follows by applying Theorem \ref{Complex Method = Calderon Product} and Proposition \ref{CP of Orlicz spaces}.
\end{proof}

Let us denote $L^s(\|m\|):=L^1(\|m\|)_{\left[\frac{1}{s}\right]},$ for $0<s<\infty$ and $m:\Sigma\to Y$ a vector measure. In \cite[Proposition 4.1]{CFMN1} we proved that if $s>1,$ then $L^s(\|m\|)$ is $r$-convex for every $r < s.$ In fact, this is true for all $0<s<\infty$ because if $0<s\leq 1$ and $r<s,$ then $\displaystyle \frac{s}{r}>1$ and hence $L^{\frac{s}{r}}(\|m\|)$ is $1$-convex, that is $L^s(\|m\|)_{[r]}$ is $1$-convex, which is equivalent to $L^s(\|m\|)$ be $r$-convex. This means that $L^s(\|m\|)$ is $L$-convex for all $0<s<\infty.$ In particular, $L^1(\|m\|)$ is $L$-convex and we can apply Corollary~\ref{CM of Orlicz spaces} to it.

\begin{corollary}
Let $\Omega$ be a Polish space and let $\mu$ be a Borel measure which is a Rybakov control measure for $m.$ Let $\Phi_0, \Phi_1\in\Delta_2,$ $0<\theta<1$ and $\Phi$ such that $\Phi^{-1} := (\Phi_{0}^{-1})^{1-\theta} (\Phi_{1}^{-1})^{\theta}.$ Then, $[L^{\Phi_0}(\|m\|),L^{\Phi_1}(\|m\|)]_{\theta} =  L^{\Phi}(\|m\|).$
\end{corollary}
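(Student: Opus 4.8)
The plan is to recognize this statement as nothing more than the specialization of Corollary~\ref{CM of Orlicz spaces} to the concrete q-B.f.s. $X=L^1(\|m\|)$. By definition we have $L^{\Phi}(\|m\|)=L^1(\|m\|)^{\Phi}$ and likewise $L^{\Phi_i}(\|m\|)=L^1(\|m\|)^{\Phi_i}$ for $i=0,1$. Hence the assertion $[L^{\Phi_0}(\|m\|),L^{\Phi_1}(\|m\|)]_{\theta}=L^{\Phi}(\|m\|)$ is literally the identity $[L^1(\|m\|)^{\Phi_0},L^1(\|m\|)^{\Phi_1}]_{\theta}=L^1(\|m\|)^{\Phi}$, which is exactly the conclusion produced by Corollary~\ref{CM of Orlicz spaces}. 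So the entire task reduces to checking that $X=L^1(\|m\|)$ meets every hypothesis of that corollary.

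First I would verify the standing assumptions on the base space. Since $\mu$ is a Rybakov control measure for $m$, it is of the form $|\langle m,y^*\rangle|$ and is therefore finite, because $|\langle m,y^*\rangle|(\Omega)\leq\|m\|(\Omega)<\infty$; together with the hypotheses that $\Omega$ is Polish and $\mu$ is Borel, this matches the requirements of Theorem~\ref{Complex Method = Calderon Product} that underlies Corollary~\ref{CM of Orlicz spaces}. Next I would recall from the Preliminaries that $L^1(\|m\|)$ is a q-B.f.s. over $\mu$ which is $\sigma$-order continuous (both facts taken from \cite{CFMN1}). The remaining structural hypothesis, namely that $L^1(\|m\|)$ is L-convex, was already established in the paragraph preceding the statement via the $r$-convexity of $L^s(\|m\|)$ for all $r<s$ and all $0<s<\infty$. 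Finally, the conditions $\Phi_0,\Phi_1\in\Delta_2$ and the relation $\Phi^{-1}=(\Phi_0^{-1})^{1-\theta}(\Phi_1^{-1})^{\theta}$ are given outright in the hypotheses.

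With all these ingredients in hand, the result follows by a single invocation of Corollary~\ref{CM of Orlicz spaces}. I do not expect any genuine obstacle here: the substantive analytic content has already been absorbed into the general Corollary~\ref{CM of Orlicz spaces} (which in turn rests on Theorem~\ref{OS L-convexity}, Proposition~\ref{CP of Orlicz spaces} and Theorem~\ref{Complex Method = Calderon Product}) and into the prior verification of L-convexity for $L^1(\|m\|)$. The only point deserving a moment's attention is the finiteness and Borel character of the Rybakov measure $\mu$, so that the Polish-space hypothesis of Theorem~\ref{Complex Method = Calderon Product} is legitimately available; once this is noted, the proof is a routine citation of the general machinery.
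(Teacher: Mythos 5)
Your proposal is correct and matches the paper's own (implicit) argument exactly: the paper proves this corollary by noting, in the paragraph immediately preceding it, that $L^1(\|m\|)$ is L-convex, and then invoking Corollary~\ref{CM of Orlicz spaces} with $X=L^1(\|m\|)$, whose remaining hypotheses ($\sigma$-order continuity and the q-B.f.s.\ structure of $L^1(\|m\|)$ over the finite Rybakov measure $\mu$) are supplied by the preliminaries. Your additional remark on the finiteness and Borel character of $\mu$ is a sensible explicit verification of what the paper leaves tacit.
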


Note that, for $p>1,$ $\displaystyle\frac{1}{p}$-th powers are an special case of Orlicz spaces, since $X_{\left[\frac{1}{p}\right]} = X^{\Phi_{[p]}},$ where $\Phi_{[p]}(t)=t^p.$ If we particularize the previous Corollary to these powers, then we obtain the interpolation result below for $L^p(\|m\|)$ spaces. In fact, this result is valid for all $0<p_0, p_1<\infty$ due to the fact that the Calder\'on product \emph{commutes} with powers for all indices.

\begin{corollary}
Let $\Omega$ be a Polish space and let $\mu$ be a Borel measure which is a Rybakov control measure for $m.$ Let $0<\theta<1$ and $0<p_0, p_1<\infty.$ Then $\left[L^{p_0}(\|m\|),L^{p_1}(\|m\|)\right]_{\theta} =  L^{p}(\|m\|),$ where $\displaystyle\frac{1}{p} = \frac{1-\theta}{p_0} + \frac{\theta}{p_1}.$
\end{corollary}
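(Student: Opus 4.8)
The plan is to reduce the full range $0<p_0,p_1<\infty$ to the already accessible case of exponents larger than $1$ by exploiting the behaviour of the Calder\'on product under powers, and then to invoke Theorem \ref{Complex Method = Calderon Product} to pass from the Calder\'on product to the complex method.

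The crux is the purely algebraic identity
\[
\left(X_0^{1-\theta}X_1^{\theta}\right)_{[r]} = (X_0)_{[r]}^{1-\theta}(X_1)_{[r]}^{\theta},
\]
valid for any q-B.f.s. $X_0,X_1$ over $\mu$ and any $r>0$, with equality of quasi-norms. I would prove it by substituting $|f|^{1/r}$ for $f$ in the defining condition (\ref{defCL}): a representation $|f|^{1/r}\le \lambda|g_0|^{1-\theta}|g_1|^{\theta}$ with $g_i\in B_{X_i}$ becomes, after raising to the power $r$ and setting $h_i:=|g_i|^{r}$, exactly $|f|\le\lambda^{r} h_0^{1-\theta}h_1^{\theta}$ with $h_i\in B_{(X_i)_{[r]}}$, and every step reverses. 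Together with the composition rule $(X_{[a]})_{[b]}=X_{[ab]}$ and the definition $L^{s}(\|m\|)=L^{1}(\|m\|)_{[1/s]}$, this gives $(L^{p_i}(\|m\|))_{[r]}=L^{p_i/r}(\|m\|)$ and $(L^{p}(\|m\|))_{[r]}=L^{p/r}(\|m\|)$.

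Next I would fix $0<r<\min\{p_0,p_1\}$ and set $q_i:=p_i/r>1$ and $q:=p/r$, so that $1/q=(1-\theta)/q_0+\theta/q_1$ and $q>1$. Since $L^{q_i}(\|m\|)=L^1(\|m\|)^{\Phi_{[q_i]}}$ and $L^{q}(\|m\|)=L^1(\|m\|)^{\Phi_{[q]}}$ for these exponents, Proposition \ref{CP of Orlicz spaces} (applied to $X=L^1(\|m\|)$ with $\Phi_i=\Phi_{[q_i]}$) yields $L^{q_0}(\|m\|)^{1-\theta}L^{q_1}(\|m\|)^{\theta}=L^{q}(\|m\|)$. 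Feeding the power identities of the previous paragraph into the commutation identity then gives
\[
\left(L^{p_0}(\|m\|)^{1-\theta}L^{p_1}(\|m\|)^{\theta}\right)_{[r]} = L^{q_0}(\|m\|)^{1-\theta}L^{q_1}(\|m\|)^{\theta}=L^{q}(\|m\|)=\left(L^{p}(\|m\|)\right)_{[r]},
\]
and cancelling the $r$-th power (apply $(\cdot)_{[1/r]}$ to both sides) produces the Calder\'on product identity $L^{p_0}(\|m\|)^{1-\theta}L^{p_1}(\|m\|)^{\theta}=L^{p}(\|m\|)$ for \emph{all} $0<p_0,p_1<\infty$.

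Finally I would pass to the complex method. The spaces $L^{p_i}(\|m\|)$ are L-convex (as recalled just before the statement) and $\sigma$-order continuous (inherited from $L^1(\|m\|)$ through powers), $\Omega$ is Polish and $\mu$ is finite, so Theorem \ref{Complex Method = Calderon Product} applies and identifies $[L^{p_0}(\|m\|),L^{p_1}(\|m\|)]_{\theta}$ with the Calder\'on product just computed, namely $L^{p}(\|m\|)$. I expect the main obstacle to be the commutation identity together with the bookkeeping needed to confirm that $\sigma$-order continuity and L-convexity genuinely persist for all $0<p_i<\infty$ and not merely for $p_i>1$; the Calder\'on product identity itself could alternatively be obtained directly from the pointwise inequality $u^{1-s}v^{s}\le(1-s)u+sv$, which shows that the Calder\'on product of two powers of a single q-B.f.s. never leaves that space.
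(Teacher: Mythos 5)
Your proposal is correct and takes essentially the same route as the paper: the paper obtains the case $p_0,p_1>1$ by particularizing Corollary \ref{CM of Orlicz spaces} (i.e.\ Proposition \ref{CP of Orlicz spaces} combined with Theorem \ref{Complex Method = Calderon Product}) to the Young functions $\Phi_{[p]}(t)=t^{p},$ and extends to all $0<p_0,p_1<\infty$ by precisely the commutation of the Calder\'on product with powers that you prove. Your write-up simply makes explicit the details the paper leaves as a remark --- the commutation identity, the composition rule for powers, and the persistence of L-convexity and $\sigma$-order continuity under powers --- and all of these check out.
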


\begin{bibdiv}
\begin{biblist}

\bib{AlBu}{book}{
   author={Aliprantis, C.D.},
   author={Burkinshaw, O.},
   title={Locally solid Riesz spaces},
   note={Pure and Applied Mathematics, Vol. 76},
   year={1978},
}

\bib{CFMMN}{article}{
   author={del Campo, R.}
   author={Fern\'andez, A.}
   author={Manzano, A.}
   author={Mayoral, F.}
   author={Naranjo, F.}
   title={Complex interpolation of Orlicz spaces with respect to a vector measure},
   journal={Math. Nachr.},
   volume={287},
   number={1},
   date={2014},
   pages={23--31},
}

\bib{CFMN1}{article}{
   author={del Campo, R.}
   author={Fern\'andez, A.}
   author={Mayoral, F.}
   author={Naranjo, F.}
   title={Reflexivity of function sapces associated to a $\sigma$-finite vector measure},
   journal={J. Math. Anal. Appl.},
   volume={438},
   date={2016},
   pages={339--350},
}

\bib{CFMN2}{article}{
   author={del Campo, R.}
   author={Fern\'andez, A.}
   author={Mayoral, F.}
   author={Naranjo, F.}
   title={The de la Vall\'ee-Poussin theorem and Orlicz spaces associated to a vector measure},
   journal={J. Math. Anal. Appl.},
   volume={470},
   date={2019},
   pages={270--291},
}

\bib{preprint}{article}{
   author={del Campo, R.}
   author={Fern\'andez, A.}
   author={Mayoral, F.}
   author={Naranjo, F.}
   title={Compactness on $L^1$ of the semivariation of a vector measure},
   journal={preprint},
}

\bib{CR}{article}{
   author={Curbera, G.P.},
   author={Ricker, W.J.},
   title={Banach lattices with the Fatou property and optimal domains of
   kernel operators},
   journal={Indag. Math. (N.S.)},
   volume={17},
   date={2006},
   number={2},
   pages={187--204},
   issn={0019-3577},
}
\bib{DU}{book}{
    author={Diestel, J.}
    author={Uhl, J.J. Jr.}
    title={Vector measures},
    publisher={American Mathematical Society},
    series={Mathematical Surveys},
    volume={15},
    address={Providence R.I.},
    date={1977},
}

\bib{Del}{article}{
    author={Delgado, O.}
    title={Banach function subspaces of $L^1$ of a vector measure and related Orlicz spaces},
    journal={Indag. Math. (N.S.)},
    volume={15},
    number={4}
    date={2004},
    pages={485--495},
}

\bib{FMN}{article}{
   author={Fern\'andez, A.}
   author={Mayoral, F.}
   author={Naranjo, F.}
   title={Real interpolation method on spaces of scalar integrable functions with respect to vector measures},
   journal={J. Math. Anal. Appl.},
   volume={376},
   date={2011},
   pages={203--211},
}

\bib{FG}{article}{
    author={Ferrando, I.},
    author={Galaz-Fontes, F.},
    title={Multiplication operators on vector measure Orlicz spaces},
    journal={Indag. Math. (N.S.)},
    volume={20},
    number={1}
    date={2009},
    pages={57--71},
}

\bib{JPU}{article}{
    author={Jain, P.},
    author={Persson, L.E.},
    author={Upreti, P.},
    title={Inequalities and properties of some generalized Orlicz classes and spaces},
    journal={Acta Math. Hungar.},
    volume={117},
    date={2007},
    number={1--2},
    pages={161--174},
}

\bib{Kal1}{article}{
    author={Kalton, N.},
    title={Analytic functions in non-locally convex spaces},
    journal={Studia Math.},
    volume={83},
    date={1986},
    pages={275--303},
}

\bib{Kal0}{article}{
    author={Kalton, N.},
    title={Convexity conditions for non-locally convex lattices},
    journal={Glasgow Math. J.},
    volume={25},
    date={1984},
    pages={141--152},
}

\bib{Kal2}{article}{
    author={Kalton, N.},
    title={Plurisubharmonic functions on quasi-Banach spaces},
    journal={Studia Math.},
    volume={84},
    date={1986},
    pages={297--324},
}

\bib{KaMi}{article}{
    author={Kalton, N.},
    author={Mitrea, M.},
    title={Stability results on interpolation scales of quasi-Banach spaces and applications},
    journal={Trans. Amer. Math. Soc.},
    volume={350},
    number={10},
    date={1998},
    pages={3903--3922},
}

\bib{KMP}{article}{
    author={Kami\'{n}ska, A.},
    author={Maligranda, L.},
    author={Persson, L.E.},
    title={Indices, convexity and concavity of Calder\'on-Lozanovskii spaces},
    journal={Math. Scand.},
    volume={92},
    date={2003},
    pages={141--160},
}

\bib{KaAk}{book}{
    author={Kantorovich, L.V.},
    author={Akilov, G.P.},
    title={Functional Analysis},
    publisher={Pergamon press},
    date={1982},
}

\bib{KPS}{book}{
   author={Kre\u{\i}n, S.G.},
   author={Petunin, Yu. \={I}.},
   author={Semenov, E.M.},
   title={Interpolation of linear operators},
   series={Translations of Mathematical Monographs},
   volume={54},
   publisher={American Mathematical Society, Providence, R.I.},
   date={1982},
}

\bib{Mal85}{article}{
   author={Maligranda, L.},
   title={Indices and interpolation}
   journal={Dissertationes Math. (Rozprawy Mat.)},
   volume={234},
   date={1985},
   pages={1--49},
}

\bib{Mal}{article}{
   author={Maligranda, L.},
   title={Type, cotype and convexity properties of quasi-Banach spaces}
   journal={Proceedings of the International Symposium on Banach and Function Spaces},
   date={2003},
   pages={83--120}
}

\bib{ORS}{book}{
    author={Okada, S.},
    author={Ricker, W.J.},
    author={S\'anchez--P\'erez, E.A.},
    title={Optimal Domain and Integral Extension of Operators Acting in Function Spaces},
    publisher={Birkh\"{a}user-Verlag},
    series={Oper. Theory Adv. Appl.},
    volume={180},
    address={Basel},
    date={2008},
}

\end{biblist}
\end{bibdiv}

\end{document}